%
\documentclass[11pt]{article}

\usepackage{fullpage}
\usepackage[T1]{fontenc}
\usepackage{graphicx}
\usepackage{color}
\usepackage[numbers]{natbib}
\usepackage[vlined,boxed]{algorithm2e}
\usepackage{todonotes}
\usepackage{authblk}
\usepackage{subcaption}
\usepackage{enumitem,units}
\usepackage{mathtools}
\usepackage{amsfonts}
\usepackage{amsmath,amssymb,mathrsfs}
\usepackage{amsthm}
\usepackage{abstract}
\usepackage{adjustbox}
\usepackage[title]{appendix}
\usepackage{caption}
\usepackage{tikz}
\usetikzlibrary{matrix,decorations.pathreplacing,calc,fit,backgrounds}
\usetikzlibrary{positioning, automata}
\usetikzlibrary{arrows.meta}
\usetikzlibrary{fit}
\usepackage{hyperref}
\hypersetup{colorlinks,
pdfstartview = FitH,
bookmarksopen = true,
bookmarksnumbered = true,
linkcolor = blue,
plainpages = false,
hypertexnames = false,
urlcolor = blue,
citecolor = blue}

\usepackage{thmtools} 
\usepackage{thm-restate}
\usepackage{cleveref}
\usepackage{multicol}

\newtheorem{theorem}{Theorem}
\newtheorem{lemma}{Lemma}

\newtheorem{corollary}{Corollary }

\definecolor{c1}{rgb}{1.0, 0.03, 0.0}
\definecolor{c2}{rgb}{0.0, 0.0, 1.0}
\definecolor{c3}{RGB}{0,150,0}
\definecolor{c4}{rgb}{0.5, 0, 0.5}
\definecolor{c5}{rgb}{1, 0, 1}
\definecolor{c6}{rgb}{1, 0.5, 0}
\definecolor{c7}{rgb}{0, 0.7, 0.7}
\definecolor{c8}{RGB}{139,69,19}
\colorlet{cr}{c6}
\colorlet{cl}{c7}

\tikzset{%
    vertex/.style={%
        circle,inner sep=1,fill=gray!40, draw=black 
    }
}
\tikzset{%
    edge/.style={%
        black, ultra thick
    }
}
\tikzset{%
    uncoloredEdge/.style={%
        black, thick
    }
}
\tikzset{%
    arrow/.style={%
        black, ultra thick
    }
}
\tikzset{%
    arrow2/.style={%
        black, ultra thick, ->
    }
}

\tikzset{
  toprule/.style={%
    execute at end cell={%
      \draw [line cap=rect,#1] (\tikzmatrixname-\the\pgfmatrixcurrentrow-\the\pgfmatrixcurrentcolumn.north west) -- (\tikzmatrixname-\the\pgfmatrixcurrentrow-\the\pgfmatrixcurrentcolumn.north east);%
    }
  }
}

\tikzset{
    ncbar angle/.initial=90,
    ncbar/.style={
        to path=(\tikztostart)
        -- ($(\tikztostart)!#1!\pgfkeysvalueof{/tikz/ncbar angle}:(\tikztotarget)$)
        -- ($(\tikztotarget)!($(\tikztostart)!#1!\pgfkeysvalueof{/tikz/ncbar angle}:(\tikztotarget)$)!\pgfkeysvalueof{/tikz/ncbar angle}:(\tikztostart)$)
        -- (\tikztotarget)
    },
    ncbar/.default=0.1cm,
}

\tikzset{square left brace/.style={ncbar=0.1cm}}
\tikzset{square right brace/.style={ncbar=-0.1cm}}
\tikzset{round left paren/.style={ncbar=0.1cm,out=107,in=-107}}
\tikzset{round left parenbigger/.style={ncbar=0.1cm,out=100,in=-100}}
\tikzset{round left parenlarge/.style={ncbar=0.1cm,out=95,in=-95}}
\tikzset{round right paren/.style={ncbar=0.1cm,out=73,in=-73}}
\tikzset{round right parenbigger/.style={ncbar=0.1cm,out=80,in=-80}}
\tikzset{round right parenlarge/.style={ncbar=0.1cm,out=85,in=-85}}

\newcommand{\N}{\mathbb{N}}
\newcommand{\R}{\mathbb{R}}

\newif\ifpaper
\papertrue
\renewenvironment{proof}{{\textit{Proof.}}}{\qed} 

\SetKwFunction{MinimumStarColoring}{Minimum\-Star\-Coloring}
\SetKwFunction{EnsureFeasibility}{Ensure\-Feasibility}
\SetKwFunction{ColorOneEdge}{Color\-One\-Edge}
\SetKwFunction{MinimumStarColoringFlow}{Minimum\-Star\-Coloring\-Flow}

\title{On the Min-Max Star Partitioning Number}
\date{}
\author[1]{Sarah Feldmann \footnote{funded by the Deutsche Forschungsgemeinschaft (DFG, German Research Foundation) - 314838170, GRK 2297 MathCoRe}}
\author[2]{Torben Schürenberg \footnote{funded by the Deutsche Forschungsgemeinschaft (DFG) - Project number 281474342.}}
\affil[1]{Otto-von-Guericke University Magdeburg, Germany }
\affil[2]{University of Bremen, Germany}

\begin{document}  
\maketitle
\vspace{-0.5cm}
\begin{abstract}
\noindent In this paper, we introduce a novel star partitioning problem for simple connected graphs $G=(V,E)$. The goal is to find a partition of the edges into stars that minimizes the maximum number of stars a node is contained in while simultaneously satisfying node-specific capacities. We design and analyze an efficient polynomial time algorithm with a runtime of $\mathcal{O}(|E|^2)$ that determines an optimal partition. Moreover, we explicitly provide a closed form of an optimal value for some graph classes. 
We generalize our algorithm to find even an optimal star partition of linear hypergraphs, multigraphs, and graphs with self-loop. We use flow techniques to design an algorithm for the star partitioning problem with an improved runtime of $\mathcal{O}(\log(\Delta) \cdot |E| \cdot \min\{|V|^{\frac{2}{3}},|E|^{\frac{1}{2}}\})$, where $\Delta$ is maximum node degree in $G$. In contrast to the unweighted setting, we show that a node-weighted decision variant of this problem is \texttt{strongly NP-complete} even without capacity constraints. Furthermore, we provide an extensive comparison to the problem of minimizing the minimum indegree satisfying node capacity constraints.

\;\newline
\noindent\textbf{Keywords:} Edge coloring, edge orientation, max-flow, min-max objective,  star partitioning.
\end{abstract}
\section{Introduction} \label{ch:introduction} 
We introduce a novel graph problem that arises in static communication networks, the \textit{min-max star partitioning problem} (\textsc{MinMaxStar}). The goal is 
to find a partition of the edges into stars that minimizes the maximum number of stars a node is contained in. A star 
is a tree, where at most one node has more than one neighbor. In our setting, a star is required to contain at least one edge.
The \textit{internal node} of a star containing more than one edge is defined as the node with at least two neighbors. For stars consisting of only one edge, we fix one of the incident nodes as the internal node. Additionally, each node can be the internal node of at most one star.
The star partitioning problem can be seen as a new kind of edge coloring problem. 

In this paper, we define a set of valid edge colorings and optimize a min-max objective function. 
We initially deal with simple connected graphs $G=(V,E)$, where each edge contains two nodes. In Section \ref{sec:extensions_other_classes} we extend the problem to graphs with self-loops, multigraphs, and linear hypergraphs. Every node $v$ has a given capacity restriction, denoted by $\kappa_v$.
We initially color every node in a distinct color. A valid edge coloring is achieved when each edge has a color identical to that of one of its incident nodes, and for every node $v$ the number of different colors of incident edges to $v$ is at most $\kappa_v$. It is possible that no valid edge coloring exists, that fulfills the capacity restrictions. 
Our goal is to minimize the maximum number of incident edge colors of a node in valid edge colorings.
This graph coloring defines a partition of the edges of our graph into star graphs. That is why we refer to it as a \textit{star coloring} or a \textit{star partition}.
Depending on the coloring, the maximum number of different edge colors to which a node in the graph is incident may vary. Our goal is to minimize this value among all valid star colorings, which we refer to as $x^*(G)$.

\;\newline
\noindent\textbf{Our results.}
We explicitly provide a closed form of an optimal value for some graph classes in Section \ref{sec:optValue} and construct a simple polynomial-time algorithm based on depth-first search, which finds an optimal star partition in $\mathcal{O}(|E|^2)$ time in Section \ref{ch:algorithm}. We apply this algorithm to linear hypergraphs.
Given an integer $x$, we test if $x^*(G)\leq x$ holds by computing a maximum flow in a network with unit capacities constructed from $G$. 
This flow technique and a binary search allow us to improve the simple algorithm to a runtime of $\mathcal{O}(\log(\Delta) \cdot |E| \cdot \min\{|V|^{\frac{2}{3}},|E|^{\frac{1}{2}}\})$ in Section \ref{ch:improvedAlgo}. 
We apply both algorithms to graphs with multiple edges and self-loops in Section \ref{sec:extensions_other_classes}.
We show similarities and differences between \textsc{MinMaxStar} and the problem of \textit{minimizing the maximum indegree} (\textsc{MinMaxInd}), which was studied by Asahiro et al. in \cite{DBLP:journals/ijfcs/AsahiroJMO11, DBLP:journals/ijfcs/AsahiroMOZ07} and Venkateswaran in \cite{DBLP:journals/dam/Venkateswaran04}. Additionally, we provide solutions that are optimal for both problems simultaneously when no capacity constraints are present in Section \ref{sec:StarInd}.
As a separation, we show in Section \ref{ch:weightedMinMax} that the node-weighted decision variants \textsc{W-MinMaxStar} and \textsc{W-MinMaxInd} of these problems are \texttt{strongly NP-complete} even without capacity constraints and provide a 2-approximation algorithm for \textsc{W-MinMaxInd} and a 4-approximation algorithm for \textsc{W-MinMaxStar} in Section \ref{ch:approx}.

\;\newline
\noindent\textbf{Related work.}
Various star coloring and star partitioning problems with different objective functions have been explored in the literature. 

Graph colorings have been researched extensively in the past, for both edge and node colorings \cite{10.5555/1457583, Jensen1994GraphCP}.
Feasible edge colorings are subject to different rules and various objective functions are studied in the literature. The most well-known edge coloring rule specifies that adjacent edges can not share the same color. The edge chromatic number is defined as the minimum number of different colors needed to color a graph according to this rule, researched for instance in \cite{DBLP:journals/dm/BeinekeW73, DBLP:journals/dm/FaudreeS84, DBLP:journals/jgt/MelnikovV99, DBLP:journals/dm/Zhang16b}. In his seminal work \cite{vizing1964estimate}, Vizing proved that the edge chromatic number of any simple graph $G$ is either $\Delta(G)$ or $\Delta(G)+1$, where $\Delta(G)$ denotes the maximum degree of the graph.

Divya and Vijayakumar analyze the partition of nodes of split graphs into as few sets as possible, such that the induced subgraph of every set in the partition is a star \cite{Divya_Vijayakumar}.
The star chromatic number researched by Borodin \cite{DBLP:journals/dm/Borodin79} is the minimum number of colors needed for a vertex coloring in which every path on four vertices uses at least three distinct colors. 
Fertin et al. define a star coloring as a node coloring where incident nodes have different colors and no path of length three is bicolored \cite{fertin2004star}. Egawa et al. \cite{egawa1997star} and Shalu et al. \cite{shalu2022induced} define a star partition as a partition of the nodes of a given graph such that for each node set of the partition, the induced subgraph is a star. In contrast to our definition, Egawa et al. and Shalu et al. color or partition the nodes and not the edges.
Minimizing the total number of stars results in the vertex cover problem, for which Karp shows the \texttt{NP-completeness} of the decision variant in \cite{Karp1972}. 

Given a star partitioning, we define the value of a node as the number of stars the node is contained in. Another natural utilitarian objective function is given by minimizing the sum of these values. This objective counts every edge and every star exactly once, and is thus a reformulation of the \texttt{NP-complete} vertex cover problem. 
On the other hand, if for every node $v$ we do not count the star with internal node $v$, the problem gets easy to solve because the optimal objective value $|E|$ is independent of the edge partitioning.

\begin{figure}[tb]
\centering
\subfloat[An optimal solution of \textsc{MinMaxStar} which is not optimal for \textsc{MinMaxInd}]{
\trimbox{-2cm 0cm -2cm 0cm}{
\begin{tikzpicture}[scale=1.3]
\coordinate (c1) at (-1,0);
\coordinate (c2) at (1,0);
\coordinate (c3) at (0,1);

\foreach \i in {1,2,3} {
\node[vertex,fill=c\i!50] (v\i) at (c\i) {$v_{\i}$};
}

\draw [->,ultra thick,c1] (v1) to (v2);
\draw [->,ultra thick,c3] (v3) to (v2);
\draw [->,ultra thick,c3] (v3) to (v1);
\end{tikzpicture}
}}\hfill
\subfloat[An optimal solution of \textsc{MinMaxInd} which is not optimal for \textsc{MinMaxStar}]{
\trimbox{-2cm 0cm -2cm 0cm}{
\begin{tikzpicture}[scale=1.3]
\coordinate (c1) at (0,0);
\coordinate (c2) at (1,0.5);
\coordinate (c3) at (0,1);
\coordinate (c4) at (2,0);
\coordinate (c5) at (2,1);

\foreach \i in {1,2,3,4,5} {
\node[vertex,fill=c\i!50] (v\i) at (c\i) {$v_{\i}$};
}
\draw [->,ultra thick,c3] (v3) to (v1);
\draw [->,ultra thick,c2] (v2) to (v1);
\draw [->,ultra thick,c3] (v3) to (v2);
\draw [->,ultra thick,c4] (v4) to (v2);
\draw [->,ultra thick,c2] (v2) to (v5);
\draw [->,ultra thick,c4] (v4) to (v5);
\end{tikzpicture}
}}
\setlength{\belowcaptionskip}{-0.1cm}
\caption{
Examples where optimal solutions of \textsc{MinMaxInd} and \textsc{MinMaxStar} do not coincide.
}
\label{fig:example_orientation2}
\end{figure}
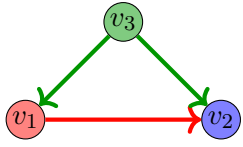
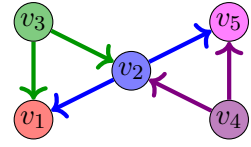 
In a graph where each node has a distinct color, directing an edge is equivalent to assigning the color of its tail to the edge. Many papers discussing various edge orientation problems were recently published, for instance, by Asahiro et al. \cite{asahiroLongestPathOrientation}.
Other orientation problems have been studied by Ahadi and Dehghan \cite{DBLP:journals/ipl/AhadiD13}, Asahiro et al. \cite{DBLP:journals/ijfcs/AsahiroJMO11, DBLP:journals/ijfcs/AsahiroMOZ07}, Borradaile et al. \cite{DBLP:journals/jgaa/BorradaileIMOWZ17} and Venkateswaran \cite{
DBLP:journals/dam/Venkateswaran04}. Borradaile et al. introduce an algorithm that minimizes the lexicographic order of the indegrees. 
Venkateswaran provides an algorithm with a runtime of $\mathcal{O}(|E|^2)$, that computes an edge orientation which minimizes the maximum indegree $k^*(G)$. This was further improved by Asahiro et al. \cite{DBLP:journals/ijfcs/AsahiroMOZ07} to a runtime of $\mathcal{O}(|E|^{\frac{3}{2}}\cdot \log(\Delta(G)))$ using flow techniques.
Without node capacity constraints in contrast to our objective, Asahiro et al. and Venkateswaran direct the edges to minimize only the maximum number of outgoing and incoming edges, respectively, over all nodes. This corresponds to minimizing the maximum number of stars containing $v$ that do not have $v$ as an internal node. Thus, $k^*(G) \leq x^*(G) \leq k^*(G)+1$. We will denote the problem of finding an orientation that achieves $k^*(G)$ while ensuring node capacity constraints by \textsc{MinMaxInd}.  
As evident in Figure \ref{fig:example_orientation2}, optimal star colorings do not always coincide with optimal edge orientations. 
Therefore, our variant needs to be examined separately. 

\;\newline
\noindent\textbf{Application example.} We are given a set of players, represented by the nodes of a graph. For each player, the task is to communicate with all their neighboring players in this static communication network. Each player $i$ has a secret communication key with the weight $w_i$. If two neighboring players want to communicate with each other, one of the two players must remember the key of the other. The sum of the weights of the communication keys that the players need to memorize is called the memory requirement of a player. What is the smallest maximum memory requirement over all players? 
If each person does not need to memorize their own communication key, we have \textsc{W-MinMaxInd}. If a player needs to memorize their own communication key if it is memorized by at least one other player, we have the problem \textsc{W-MinMaxStar}. If we set the weight of each key to 1, we obtain the unweighted problems \textsc{MinMaxInd} and \textsc{MinMaxStar}, for which we can construct a solution that is optimal for both cases simultaneously. 

\clearpage\section{Problem statement}\label{ch:model}
We consider a non-empty undirected simple connected graph $G=(V, E)$. The terms \textit{coloring} and \textit{partitioning} are used interchangeably.
For each node ${v\in V}$, we define the degree $\delta(v) \coloneqq {| \{ e\in E \;|\; v\in e\}|} $ as the number of edges incident to $v$. The degree of the graph $\Delta(G)$ is thus the maximum degree of all nodes $\Delta(G) \coloneqq \max_{v \in V} \delta(v)$. 
A node capacity $\kappa_v\in \N_{\geq 0}$ and a unique color $C(v)$ are assigned to each node $v$.
If no node capacities are given, they are assumed to be $\Delta(G)$. An edge coloring assigns a color $C(e)$ to each edge $e$. A star coloring is an edge coloring where each edge $e\in E$ has a node $v \in e$ with $C(e)= C(v)$. A \textit{valid} star coloring on $G$ is a star coloring where the number of distinct incident edge colors never exceeds the node capacity $\kappa_v$ for all $v\in V$. A partial star coloring on $G$ is a star coloring of $G'=(V,E')$ for some $E'\subseteq E$.
The \textit{set of all valid star colorings} on $G$ is referred to as $\mathcal{C}(G)$, and the set of all valid partial star colorings on $G$ as $\mathcal{C}'(G)$. We omit $G$ if it is obvious. 
For each $C\in \mathcal{C}(G)$ and $v\in V$, we define $x(G,C,v) \coloneq |\{ C(e) \;|\; e\in E, v\in e\}|$, the number of different edge colors that the node $v$ is incident to. Additionally, we define the \textit{star partitioning number} $x(G,C)\coloneq \max\limits_{v\in V} \;x(G,C,v)$, the maximum value of $x(G,C,v)$ over all nodes $v\in V$. The \textit{min-max star partitioning number} in the set of all valid star colorings is denoted by $$x^*(G)\coloneq \min\limits_{C\in \mathcal{C}(G)}\;x(G, C) = \min\limits_{C\in \mathcal{C}(G)}\;  \max\limits_{v\in V} \; x(G, C,v).$$
If no valid star coloring exists, we set $x^*(G) \coloneq \infty$. 

We define the \textit{min-max star partitioning problem} (\textsc{MinMaxStar}) 
and similar to Asahiro et al. in \cite{DBLP:journals/ijfcs/AsahiroMOZ07} the \textit{min-max indegree problem} (\textsc{MinMaxInd}) with node capacities as follows:\\

\begin{tabular}{l}
\hspace{-0.5cm}\textbf{(\textsc{MinMaxStar}): min-max star partitioning problem} \\
\textbf{Input:} A simple connected graph $G=(V,E)$ and $\kappa_v\in\N_{\geq 0}$ for all $v\in V$.\\
\textbf{Output:} Compute a valid star coloring $C$ with $x(G,C)=x^*(G)$ or state
that none exists.
\end{tabular}
\vspace{0.3cm}

\begin{tabular}{l}
\hspace{-0.5cm}\textbf{(\textsc{MinMaxInd}): min-max indegree problem} \\
\textbf{Input:} A simple connected graph $G=(V,E)$ and $\kappa_v\in\N_{\geq 0}$ for all $v\in V$.\\
\textbf{Output:} Compute an orientation, which minimizes the maximum indegree
such that \\
$\delta^-(v)\leq \kappa_v$ for all $v\in V$ or state that none exists.
\end{tabular}
\vspace{0.3cm}

\noindent 
A partial coloring $C'\in \mathcal{C}'(G)$ is called $x$-satisfying if every node is incident to at most $x$ different edge colors in every valid star coloring $C\in \mathcal{C}(G)$, which can be constructed from $C'$ by assigning a color to each uncolored edge.
Given a coloring $C\in\mathcal{C}(G)$, the only way to reduce the number of stars a node is contained in is to color more incident edges in its color.
For a given $x$, a partial coloring $C'$ is thus an $x$-satisfying partial coloring if for every node $v\in V$ it holds that every completion $C\in \mathcal{C}(G)$ assigns the color $C(v)$ to at least 
$l(v,x)\coloneq \max \{ 0,\delta(v)-\min \{ \kappa_v,x\}+1 \}$ many edges incident to $v$ if $l(v,x)>1$ holds. In this setting $x^*(G)$ is the smallest $x$ for which it is possible, that every node $v$ with $l(v,x)>1$ has at least $l(v,x)$ incident edges colored in its color.

A valid star coloring $C$ can be transformed into an orientation $\Lambda(G,C)$ by directing each edge $\{v,w\}$ colored with $C(v)$ from $v$ to $w$. 
Additionally, each orientation $\Lambda(G,C)$ can be transformed into a valid star coloring $C$ by coloring each edge $\{v,w\}$, which is directed to $w$ with the color $C(v)$. Thus, valid colorings and orientations are inverse to each other. So we can use these perspectives interchangeably.

\section{Optimal value for special graph classes}\label{sec:optValue}
For a few graph classes, we can state the optimal value $x^*(G)$ explicitly.
\begin{restatable}{lemma}{lemmaSpecialGraphs}
\label{lemma:special-graphs}
The following statements hold for simple graphs $G$.
\begin{enumerate}
    \item[(a)] ${x^*(G) \geq 1}$ iff $G$ has at least one edge. 
    \item[(b)] ${x^*(G) \leq 1}$ iff $G$ is a cycle free graph with a diameter at most two. 
    \item[(c)] If $G$ is a pseudoforest, i.e., in each connected component there exists at most one cycle, then ${x^*(G) \leq 2}$.
    \item[(d)] Let $G$ be the fully connected bipartite graph $K_{n,n}$ with $1<n\in \mathbb{N}$. Then ${x^*(K_{n,n})=\left\lceil \frac{n}{2}\right\rceil + 1}$. 
\end{enumerate}
\end{restatable}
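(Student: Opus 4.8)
The plan is to dispatch (a)--(c) quickly via the star--orientation correspondence from \Cref{ch:model} and then spend the real effort on the two matching bounds in (d). For (a): if $G$ has no edge, the empty coloring is valid (as every $\kappa_v\ge 0$) and makes every node incident to $0$ colors, so $x^*(G)=0<1$; if $G$ has an edge $e=\{u,v\}$, then any valid star coloring colors $e$, so $u$ is incident to at least one edge color, giving $x(G,C)\ge 1$ for every valid $C$, hence $x^*(G)\ge 1$ (and $x^*(G)=\infty$ if no valid coloring exists). For (b), I would observe that $x^*(G)\le 1$ holds exactly when the color classes of some valid coloring form pairwise vertex-disjoint stars covering $E$. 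If (the connected graph) $G$ has a cycle, then at any cycle vertex $w$ both incident cycle edges lie in $w$'s unique star, forcing $w$ to be its internal node; but then two adjacent cycle vertices are internal nodes of distinct stars that both contain the edge between them -- impossible. If $G$ has a path $a\!-\!b\!-\!c\!-\!d$, then $\{b,c\}$ lies in both $b$'s and $c$'s star, so these classes coincide and contain $\{a,b\},\{b,c\},\{c,d\}$, which is not a star. Thus $x^*(G)\le 1$ forces $G$ to be a cycle-free graph of diameter $\le 2$, i.e.\ a star $K_{1,m}$; conversely coloring every edge of $K_{1,m}$ with its center's color witnesses $x^*(G)\le 1$.

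For (c) I would use that every pseudoforest admits an orientation with $\delta^-(v)\le 1$ for all $v$: root each tree component and orient parent$\to$child, and in a unicyclic component orient the cycle cyclically and orient each pendant subtree away from its attachment vertex. Converting this orientation to a star coloring, the number of distinct incident edge colors at $v$ equals $\delta^-(v)$ plus one if $v$ has an outgoing edge, hence is at most $2$; the coloring is valid for the default capacities $\kappa_v=\Delta(G)$ (and if $\Delta(G)\le 1$ then $x^*(G)\le 1$ trivially), so $x^*(G)\le 2$. (Alternatively, invoke $x^*(G)\le k^*(G)+1$ together with the fact that $k^*(G)\le 1$ for pseudoforests.)

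The heart is (d). For the lower bound I would argue by averaging over indegrees: fix any valid star coloring of $K_{n,n}$, regarded as an orientation; the indegrees sum to $|E|=n^2$ over $2n$ nodes, so some node $v$ has $\delta^-(v)\ge\lceil n/2\rceil$. If $\delta^-(v)=n$ then $v$ is incident to $n$ colors, and $n\ge\lceil n/2\rceil+1$ for $n\ge 2$; otherwise $v$ has an outgoing edge, and since its $\delta^-(v)$ in-edges carry pairwise distinct colors, all different from $C(v)$, node $v$ is incident to $\delta^-(v)+1\ge\lceil n/2\rceil+1$ colors. Either way $x^*(K_{n,n})\ge\lceil n/2\rceil+1$. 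For the matching upper bound I would exhibit one orientation: with $A,B$ the two sides of size $n$, let $d=\lfloor n/2\rfloor$ and let $H\subseteq A\times B$ be a $d$-regular bipartite graph (e.g.\ the circulant $a_i\to b_i,b_{i+1},\dots,b_{i+d-1}$, indices mod $n$). Orient the edges in $H$ from $A$ to $B$ and all other edges from $B$ to $A$. Then each $a_i$ has outdegree $d\ge 1$ and indegree $n-d=\lceil n/2\rceil$, and each $b_j$ has indegree $d$ and outdegree $n-d\ge 1$; so the $a_i$ see $\lceil n/2\rceil+1$ colors and the $b_j$ see $\lfloor n/2\rfloor+1\le\lceil n/2\rceil+1$ colors. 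The coloring is valid for $\kappa_v=n$, so $x^*(K_{n,n})\le\lceil n/2\rceil+1$.

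The main obstacle I anticipate is pinning down the exact constant in (d): both the lower and upper bounds must carefully track the ``$+1$'' contributed by a single outgoing edge, and the construction is tight only because the heavy side $A$ receives indegree exactly $\lceil n/2\rceil$ \emph{and} retains a nonzero outdegree -- so the bookkeeping of in-edge colors versus a node's own color $C(v)$ is exactly where care is needed. The only other subtlety is the implicit connectivity used in (b); for disconnected graphs one should read ``diameter at most two'' componentwise.
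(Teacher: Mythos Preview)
Your proposal is correct and essentially follows the paper's approach. Parts (a)--(c) match the paper's arguments almost verbatim (the paper phrases (b) as ``two adjacent nodes of degree $\geq 2$ would each have to color the shared edge in their own color,'' which is exactly your vertex-disjoint-stars contradiction), and for (d) both your lower bound and the paper's are the same double-counting of $|E|=n^2$ edges against $2n$ nodes, just phrased on opposite sides (you average indegrees, the paper sums the required outdegrees $l(v,x)$). The only cosmetic difference is the witnessing orientation in the upper bound of (d): the paper uses a $2\times 2$ block construction on the index sets $\{1,\dots,\lfloor n/2\rfloor\}$ and $\{\lfloor n/2\rfloor+1,\dots,n\}$, whereas you use a circulant $\lfloor n/2\rfloor$-regular bipartite subgraph; both yield identical degree profiles and the same value $\lceil n/2\rceil+1$.
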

\noindent The proof is given in Appendix \ref{app:characterisation}.
\section{Algorithm for \textsc{MinMaxStar}}\label{ch:algorithm}
In this section, we sketch the algorithm \MinimumStarColoring, which computes a min-max star partition or states infeasibility in $\mathcal{O}(|E|^2)$ time. The pseudocode can be found in the appendix in Algorithm \ref{algorithm}.

A valid partial star coloring $C$ can be seen as a partially directed graph $G_C$, where edges $\{v,w\}$ with the color $C(v)$ are directed toward $v$. Note that $\Lambda(G,C)$ is the inverse orientation of $G_C$. We use this formulation for the algorithm.

We always know that $x^*(G)\leq \Delta(G)$ if the instance is feasible. Thus, we start with $x=\Delta(G)$. If we find an $x$-satisfying partial coloring $C$ for some $x$, we decrease $x$ by one and repeat the paragraph below using the partial coloring $C$ until we find an integer $x$ with $x^*(G)>x$. Completing $C$ with random valid colors yields a min-max star partition of $G$.

We aim to find a star coloring $C$ fulfilling $x(G,C)\leq x$ or determine $x^*(G)>x$ the following way. 
For each node $v$ with $l(v,x)>1$, we try to color incident edges with $C(v)$ until $l(v,x)$ edges are colored with $C(v)$. This coloring is done via a depth-first search in $G_C$ for an uncolored edge. If we find such an edge, we insert it into the path and invert the direction of all edges on that path, such that the whole path points toward $v$. The coloring $C$ is changed appropriately. 
For all nodes $w$ along the path, except $v$, inverting the direction of the path does not change the number $|C(w)|$. Additionally, we enlarge $|C(v)|$ by one.
If such a path does not exist, we return that $x^*(G)>x$ holds.

Figure \ref{fig:Color} shows the last steps of the algorithm using an example, where it has already found a partial 3-satisfying coloring and tries to improve it to a partial 2-satisfying coloring. Note that the depth-first search fails even though there exists an uncolored edge in the graph.

Each depth-first search runs in $\mathcal{O}(|E|)$ time (see \cite{even2011graph}) and either colors an additional edge or states infeasibility. During the algorithm, we store for each node $v$ its degree and the number of edges colored with $C(v)$. Thus, we only need $\mathcal{O}(|E|^2)$ steps.

If the depth-first search fails at a node $v$, where less than $l(v,x)>1$ edges are colored with $C(v)$, then there is no uncolored edge in $G_C$ reachable from $v$. During the algorithm, we colored for each node $w$ at most $l(w,x)$ edges with $C(w)$ iff $l(w,x)>1$ holds. Let $K$ be the set of nodes in $G_C$ reachable from $v$. The number of edges incident to at least one node in $K$ is strictly less than $\sum_{w \in K : l(w,x)>1} l(w,x)$. The strict inequality results from $l(v,x)>1$ and because we colored strictly less than $l(v,x)$ edges with $C(v)$. Therefore, it is not possible to color an additional edge incident to any node $u\in K$ with $C(u)$ without reducing the number of edges colored with $C(w)$ for another node $w \in K$. 
Each $u\in K$ is reachable from $v$ and the set of nodes reachable from $u\in K$ is contained in $K$. Thus, we have $x^*(G)>x$. This results in the following theorem.

\begin{restatable}{theorem}{equadrat}
\label{theorem:correctness}
The  \MinimumStarColoring algorithm terminates after $\mathcal{O}(|E|^2)$ steps with the correct solution to \textsc{MinMaxStar}.
\end{restatable}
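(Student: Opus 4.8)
The plan is to establish two things: (i) the algorithm runs in $\mathcal{O}(|E|^2)$ time, and (ii) when it terminates it returns a correct answer — either a valid star coloring $C$ with $x(G,C)=x^*(G)$, or the (correct) declaration that the instance is infeasible. The runtime bound is essentially already argued in the text and I would only tighten it: the outer loop decreases $x$ from $\Delta(G)$ and runs at most $\Delta(G)\le |E|$ times; but more carefully, the total number of depth-first searches performed across the whole execution is bounded by the total number of edge-colorings ever committed plus the number of failed searches. Each successful DFS colors one additional edge in the color of the target node, and no edge is ever uncolored again (inverting a path toward $v$ preserves the colored/uncolored status of every edge and every count $|C(w)|$ for $w\ne v$), so there are at most $|E|$ successful searches in total; each failure immediately decreases $x$ or halts, so there are at most $\Delta(G)+1$ failures. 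Each DFS on $G_C$ costs $\mathcal{O}(|E|)$ (standard DFS, see \cite{even2011graph}), and maintaining the stored degree and color-count for each node is $\mathcal{O}(1)$ per edge operation, giving $\mathcal{O}(|E|^2)$ overall.

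For correctness I would split into the feasible and infeasible cases. \emph{Soundness of the failure output:} this is the heart of the argument and is exactly the paragraph preceding the theorem, which I would formalize as a lemma. Suppose the DFS for some node $v$ with $l(v,x)>1$ fails, having colored $c_v<l(v,x)$ edges with $C(v)$. Let $K$ be the set of nodes reachable from $v$ in $G_C$. Then every edge reachable from $v$ (hence every edge incident to a node of $K$, since no edge incident to $K$ can be uncolored and leave $K$... more precisely: every colored edge incident to $u\in K$ points toward $u$ or toward another node, and following its orientation stays in $K$) is already colored with the color of one of its endpoints in $K$; the number of such edges is at most $\bigl(\sum_{w\in K,\, l(w,x)>1} l(w,x)\bigr) - 1$ because along the way we colored at most $l(w,x)$ edges per node $w\in K$ and \emph{strictly} fewer than $l(v,x)$ for $v$ itself. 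Since the edges incident to $K$ must be distributed among the nodes of $K$ so that each $w$ with $l(w,x)>1$ receives at least $l(w,x)$ of them, a counting/pigeonhole argument shows no valid $x$-satisfying coloring of $G$ exists, i.e. $x^*(G)>x$; and this conclusion is stable under any reordering of which node the algorithm processes, so the failure is genuine and not an artifact of greedy choices. \emph{Soundness of the success output:} when the algorithm finishes the loop for a given $x$ without failure, every node $w$ with $l(w,x)>1$ has exactly $l(w,x)$ incident edges in color $C(w)$; completing $C$ by assigning each remaining uncolored edge an arbitrary color of one of its endpoints yields $C\in\mathcal{C}(G)$ with $x(G,C,v)\le x$ for all $v$ — indeed for a node $v$ the number of distinct incident colors is at most $1$ (its own, if used) plus the number of incident edges not in color $C(v)$, which is $\delta(v)-l(v,x)\le \min\{\kappa_v,x\}-1$ when $l(v,x)>1$, and at most $\min\{\delta(v),\kappa_v,x\}$ when $l(v,x)\le 1$; in all cases this is $\le\min\{\kappa_v,x\}\le x$, so $C$ is valid and $x$-satisfying.

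Finally I would tie the two halves together to get optimality: the algorithm stops at the first $x$ for which the DFS fails, say $x=x_0-1$, and returns the coloring it held for $x=x_0$. By the soundness of the success output, $x^*(G)\le x_0$; by the soundness of the failure output, $x^*(G)>x_0-1$, hence $x^*(G)=x_0$ and the returned coloring is optimal. The boundary cases are routine: the initial $x=\Delta(G)$ always admits an $x$-satisfying coloring when the instance is feasible (since $l(v,\Delta(G))\le 1$ for all $v$, so no DFS is even needed), so the algorithm correctly distinguishes feasible from infeasible instances — it outputs infeasibility exactly when it fails already at $x=\Delta(G)$, which by the failure lemma means $x^*(G)>\Delta(G)$, i.e. $x^*(G)=\infty$.

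\medskip
\noindent The step I expect to be the main obstacle is making the counting argument in the failure lemma fully rigorous: precisely identifying the edge set "incident to $K$", verifying that every such edge is already colored (no uncolored edge escapes the reachable region), showing the at-most-$l(w,x)$-per-node invariant is maintained throughout \emph{all} prior phases and path inversions, and confirming that the strict deficit at $v$ forces infeasibility independently of the processing order. Everything else — the runtime accounting and the completion/validity check — is bookkeeping.
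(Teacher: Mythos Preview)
Your approach is essentially the paper's own: the runtime accounting via ``each successful DFS colors one new edge'' and the correctness argument via the reachable set $K$ and the counting inequality $\#\{\text{edges incident to }K\}<\sum_{w\in K:\,l(w,x)>1}l(w,x)$ are exactly the argument sketched in the paragraphs preceding the theorem. Your write-up is in fact more careful than the paper's sketch in separating the success and failure cases and in checking the $x(G,C,v)\le\min\{\kappa_v,x\}$ arithmetic for the completed coloring.

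There is, however, one genuine slip in your boundary-case paragraph. You write that at $x=\Delta(G)$ ``$l(v,\Delta(G))\le 1$ for all $v$, so no DFS is even needed.'' This is false in the capacitated setting: $l(v,\Delta(G))=\max\{0,\delta(v)-\min\{\kappa_v,\Delta(G)\}+1\}=\max\{0,\delta(v)-\kappa_v+1\}$, which exceeds $1$ whenever $\kappa_v<\delta(v)$. This is precisely why the algorithm has the \EnsureFeasibility\ phase, and a DFS there can fail. Your conclusion that ``infeasibility is output exactly when the DFS fails already at $x=\Delta(G)$'' is still correct, but the justification must invoke your failure lemma at $x=\Delta(G)$ (if the DFS fails, the counting argument gives $x^*(G)>\Delta(G)$, hence $x^*(G)=\infty$), not the false claim that no DFS is required. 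A second, harmless overcount: in this algorithm a failed DFS always halts immediately (it never merely ``decreases $x$''), so there is at most one failure, not $\Delta(G)+1$; your $\mathcal{O}(|E|^2)$ bound is unaffected.
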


\begin{figure}[tb]
\centering
\subfloat[]{
\begin{tikzpicture}[scale=1.3]
\coordinate (c1) at (0,-0.5);
\coordinate (c2) at (0,0.5);
\coordinate (c3) at (1,0);
\coordinate (c4) at (3,-0.5);
\coordinate (c5) at (3,0.5);
\coordinate (c6) at (2.3,0.1);
\coordinate (c7) at (-1,0);

\foreach \i in {1,2,3,4,5,6,7} {
\node[vertex,fill=c\i!50] (v\i) at (c\i) {$v_{\i}$};
}
\draw[color=c2] (c2) circle (2.8mm); 

\draw[arrow2,color=c1] (v1)--(v2);
\draw[arrow2,color=c1] (v1)--(v3);
\draw[arrow2,color=c1] (v1)--(v4);
\draw[uncoloredEdge] (v1)--(v7);
\draw[arrow2,color=c3] (v3)--(v2);
\draw[arrow2,color=c2] (v2)--(v5);
\draw[arrow2,color=c2] (v2)--(v7);
\draw[arrow2,color=c3] (v3)--(v4);
\draw[arrow2,color=c3] (v3)--(v6);
\draw[uncoloredEdge] (v4)--(v5);
\draw[uncoloredEdge] (v5)--(v6);
\end{tikzpicture}
}
\qquad
\subfloat[]{
\begin{tikzpicture}[scale=1.3]
\coordinate (c1) at (0,-0.5);
\coordinate (c2) at (0,0.5);
\coordinate (c3) at (1,0);
\coordinate (c4) at (3,-0.5);
\coordinate (c5) at (3,0.5);
\coordinate (c6) at (2.3,0.1);
\coordinate (c7) at (-1,0);

\foreach \i in {1,2,3,4,5,6,7} {
\node[vertex,fill=c\i!50] (v\i) at (c\i) {$v_{\i}$};
}

\draw[arrow2,color=c1,] (v1)--(v2);
\draw[arrow2,color=c3] (v3)--(v1);
\draw[arrow2,color=c1] (v1)--(v4);
\draw[arrow2,color=c1] (v1)--(v7);
\draw[arrow2,color=c2] (v2)--(v3);
\draw[arrow2,color=c2] (v2)--(v5);
\draw[arrow2,color=c2] (v2)--(v7);
\draw[arrow2,color=c3] (v3)--(v4);
\draw[arrow2,color=c3] (v3)--(v6);
\draw[arrow2,color=c4] (v4)--(v5);
\draw[uncoloredEdge] (v5)--(v6);
\draw[color=c4] (c4) circle (2.8mm); 
\end{tikzpicture}
}
    \caption{The depicted graph is the reverse graph of $G_C$. For $x=2$ the depth-first search starting at $v_2$ is successful along the path $(v_2, v_3,v_1, v_7)$ in (a). In (b) the depth-first search starting at $v_4$ succeeds only once and then fails. This results in $x^*(G)=3$. Note that there is still one uncolored edge in the graph.}
    \label{fig:Color}
\end{figure}
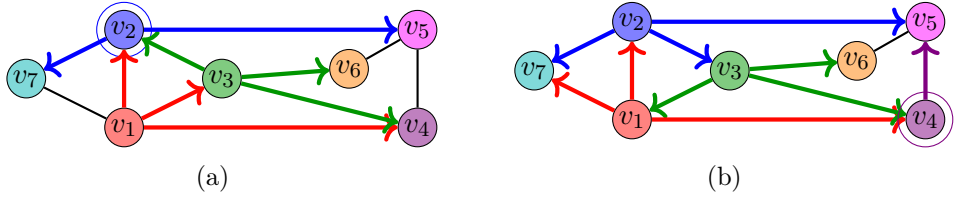

\section{Extension to other graph classes}\label{sec:extensions_other_classes}
\noindent\textbf{Self-loops.}\label{section:self-loops} 
By definition, no self-loop can be part of a star. To include self-loops in the edge partitioning, we change the definition of star graphs and allow the internal node of a star to have a self-loop. 
If we define the degree of a node as the number of incident edges, then a self-loop only increases the degree of the node by one. The algorithms and correctness proofs can be applied directly to this case. 

\;\newline
\noindent\textbf{Multigraphs.}\label{section:multigraphs}
Coloring parallel edges in a multigraph with the same color does not increase the objective value. Therefore, there are always optimal colorings that color all parallel edges with the same color. 
Thus, we transform each multigraph $H_M$ into a simple graph $G=(V,E)$ by merging each set of parallel edges into one edge, searching for a coloring in $G$ instead, and coloring every edge in $H_M$ with the assigned color of its representative.

\;\newline
\noindent\textbf{Hypergraphs.}\label{section:hypergraphs}
A hypergraph $H=(V, E)$ is called \textit{linear} if for every edge pair $e_1 \neq e_2\in E$ we have $| e_1 \cap e_2 | \leq 1$.
The depth-first search from the previous section can be applied directly to linear hypergraphs with a runtime of $\mathcal{O}(|E| \cdot \eta(H))$, where $\eta(H):=\max\{|e|  :\; e \in E\}$ is the maximum cardinality of the edges. This is due to an edge $e$ now visited not at most twice but at most $|e|$ times.
Therefore, the total runtime of the \MinimumStarColoring algorithm is $\mathcal{O}(\eta(H) \cdot |E|^2)$.
If $\eta(H)$ is bounded by a constant, we get the runtime of $\mathcal{O}(|E|^2)$.
As shown in Appendix \ref{section:hypergraphs}, the \MinimumStarColoring algorithm can not be applied directly to general hypergraphs.

\section{Improved algorithm}\label{ch:improvedAlgo}
For simple connected graphs $G=(V,E)$ with node capacities, we use the flow techniques of Asahiro et al. \cite{DBLP:journals/ijfcs/AsahiroMOZ07} to improve the runtime from $\mathcal{O}(|E|^2)$ to $\mathcal{O}(\log(\Delta) \cdot |E| \cdot \min\{|V|^{\frac{2}{3}},|E|^{\frac{1}{2}}\})$.
As noted in Section \ref{ch:model}, valid star colorings $C$ and orientations $\Lambda(G,C)$ can be transformed naturally into each other such that the transformation is inverse.
For a given $x \in \N_{\geq 0}$, we test if $x^*(G)\leq x$ holds, adapting the flow construction introduced by Asahiro et al. in \cite{DBLP:journals/ijfcs/AsahiroMOZ07}.
For any orientation $\Lambda(G,C)$, any number $x \in \N_{\geq 0}$ and any capacity constraint $\kappa_v$ we use the outdegree $\delta^+(v)$ of a node $v$ to define its slackness as
\begin{displaymath}
    s(G,\Lambda(G,C),v,x)\coloneqq
\begin{cases}
\delta^+(v)& \hspace{-1cm} \text{if } l(v,x) \leq 1,\\ 
\begin{matrix*}[l]
\delta^+(v)-l(v,x)\\
 \ = \delta^+(v)- \max \{ 0,\delta(v)-\min \{ \kappa_v,x\}+1 \}
\end{matrix*} & \text{else}.
\end{cases}
\end{displaymath}
If the slackness at $v$ is positive, then it is possible to change the orientation of $|s(G,\Lambda(G,C), v,x)|$ edges that are directed outward from $v$ without violating \mbox{$x(G,C,v)\leq x$} and the capacity constraint $\kappa_v$ of the induced coloring. On the other hand, if the slackness of $v$ is negative, then it is necessary to reverse the orientation of at least $|s(G,C,v,x)|$ edges, which are directed to $v$ in order to satisfy $x(G,C,v)\leq x$ and the capacity constraint $\kappa_v$.

Using our graph with an arbitrary orientation $\Lambda$, we construct a flow multi\-graph $\Tilde{G}=(\Tilde{V}= V\cup\{s,t\},\Tilde{E})$.
Similar to Asahiro et al. \cite{DBLP:journals/ijfcs/AsahiroMOZ07}, the edge set $\Tilde{E}$ is constructed by adding the following edges to $E$.
For each node $v$ with $s(G,\Lambda(G,C),v,x)>0$ we add $|s(G,\Lambda(G,C),v,x)|$ times the edge $(s,v)$ and for each node $v$ with $s(G,\Lambda(G,C),v,x)<0$ we add  $|s(G,\Lambda(G,C),v,x)|$ times the edge $(v,t)$. 
Because the absolute value of the slackness is bounded by the degree of the node, we get $|\tilde{E}|\leq 3|E|$. Finding the maximum flow in the unit-capacity network $\Tilde{G}$ can be done in $\mathcal{O}(|\tilde{E}| \cdot \min\{|\Tilde{V}|^{\frac{2}{3}},|\tilde{E}|^{\frac{1}{2}}\})$ time, as Even and Tarjan show in \cite{doi:10.1137/0204043}. By construction, this flow has an integer value at each edge. 
Note that this maximum flow algorithm can be replaced by any other algorithm, where a solution has the same properties. The runtime has to be adapted accordingly. 

Asking if there exists an $s$-$t$ flow in $\Tilde{G}$ with a value of at least $\sum_{v\in V} \max\{0, -s(G,\Lambda(G,C),v,x)\}$ is equivalent to the problem of increasing the slackness of all nodes with negative slackness to at least $0$. This is the case iff this flow fully uses every edge connected to $t$. 
If such an integer flow exists, we flip the orientation of every edge $e\in E$ with flow equal to one to get an induced $x$-satisfying coloring. Examples can be found in Appendix \ref{app:improvedAlgo}. 

Using this construction and binary search, we find $x \in \{1,\cdots,\Delta(G),\infty\}$ with $x=x^*(G)$. 
We get a runtime of $\mathcal{O}(\log(\Delta) \cdot |E| \cdot \min\{|V|^{\frac{2}{3}},|E|^{\frac{1}{2}}\})$. The correctness of this approach can be proved analogously to \cite{DBLP:journals/ijfcs/AsahiroMOZ07}. We call this algorithm \MinimumStarColoringFlow. The key insight about the improved runtime is, that the slackness of all nodes with negative slackness is increased simultaneously. 
Note that this approach can not be applied to linear hypergraphs directly, due to the lack of the flow techniques for hypergraphs.

\begin{theorem}
\label{thm:reduction_edge_orientation}
\MinimumStarColoringFlow computes $x^*(G)$ for a simple connect\-ed non-empty graph $G$ with a runtime of $\mathcal{O}(\log(\Delta) \cdot |E| \cdot \min\{|V|^{\frac{2}{3}},|E|^{\frac{1}{2}}\})$.
\end{theorem}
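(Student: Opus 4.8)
The plan is to split the proof into correctness and running time, and to reduce correctness to a single threshold test: for a fixed $x\in\N_{\geq 0}$, the unit-capacity network $\tilde G$ admits an integer $s$--$t$ flow saturating every edge entering $t$ if and only if $x^*(G)\leq x$; monotonicity of feasibility in $x$ then lets the binary search pin down $x^*(G)$ (or report $\infty$). First I would translate the objective into the language of orientations. Since the node colors are pairwise distinct, in $\Lambda(G,C)$ all edges leaving $v$ carry the single color $C(v)$ while every edge entering $v$ carries a distinct color different from $C(v)$, so $x(G,C,v)=\delta^-(v)+\min\{\delta^+(v),1\}$. A short case distinction on whether $\delta^+(v)=0$ shows that ``$x(G,C,v)\leq\min\{x,\kappa_v\}$'' is equivalent to ``$\delta^+(v)\geq l(v,x)$ whenever $l(v,x)>1$'', i.e.\ to nonnegativity of the slackness $s(G,\Lambda(G,C),v,x)$ at every node. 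Hence a valid star coloring with $x(G,C)\leq x$ exists iff some orientation of $G$ has all slacknesses $\geq 0$, and it remains to show the flow construction detects this, working throughout with the fixed reference orientation $\Lambda$ used to build $\tilde G$.

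For the direction ``flow $\Rightarrow$ feasible orientation'', I would take a saturating integer flow, decompose it into $s$--$t$ paths (discarding flow cycles), and flip in $E$ every edge carrying one unit of flow. Flow conservation at $v$ in $\tilde G$ gives that the net change of $\delta^+(v)$ equals $f(v,t)-f(s,v)$; at a positive-slackness node $0\leq f(s,v)\leq s(G,\Lambda,v,x)$, at a negative-slackness node $f(v,t)=|s(G,\Lambda,v,x)|$ by saturation, and a zero-slackness node is adjacent to neither $s$ nor $t$, so every slackness becomes $\geq 0$ and the induced coloring is valid and $x$-satisfying. For the converse, given an orientation $\Lambda'$ with all slacknesses $\geq 0$, I would orient the symmetric-difference edge set $D=\{e : \Lambda,\Lambda'\text{ disagree on }e\}$ by its $\Lambda$-direction, decompose this directed subgraph into directed paths plus cycles, drop the cycles, and route each remaining path as one unit of flow entering at $s$ through its start and leaving at $t$ through its end. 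The inequalities coming from $\delta^+_{\Lambda'}(v)\geq l(v,x)$ guarantee that every path start has positive slackness (so the $(s,\cdot)$ edges have enough capacity) and that the number of paths ending at any $v$ with negative slackness is at least $|s(G,\Lambda,v,x)|$, so this flow already saturates every edge into $t$, hence so does the maximum flow.

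Then I would record monotonicity: if $x<x'$ then $l(v,x')\leq l(v,x)$ and $\min\{x',\kappa_v\}\geq\min\{x,\kappa_v\}$, so any $x$-satisfying valid coloring is $x'$-satisfying; thus the set of feasible thresholds is an up-set in $\{1,\dots,\Delta(G),\infty\}$, and $x^*(G)\leq\Delta(G)$ whenever feasible. Binary search over this range, performing one threshold test per step and reporting $\infty$ if the test fails at $x=\Delta(G)$, therefore returns $x^*(G)$, and flipping the flow edges at the terminating value yields a witnessing coloring. For the running time, the excerpt already gives $|\tilde E|\leq 3|E|$ and $|\tilde V|=|V|+2$; computing the slacknesses, assembling $\tilde G$, and flipping edges cost $\mathcal{O}(|E|+|V|)=\mathcal{O}(|E|)$ since $G$ is connected, and the Even--Tarjan bound gives $\mathcal{O}(|\tilde E|\cdot\min\{|\tilde V|^{2/3},|\tilde E|^{1/2}\})=\mathcal{O}(|E|\cdot\min\{|V|^{2/3},|E|^{1/2}\})$ per test; multiplying by the $\mathcal{O}(\log\Delta)$ binary-search steps gives the claimed bound.

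I expect the main obstacle to be the ``feasible orientation $\Rightarrow$ saturating flow'' direction, because the raw symmetric difference need not itself be a valid flow --- a negative-slackness node can be the endpoint of more $D$-paths than its $(v,t)$-capacity, or carry excess flow from cycles --- so the argument really depends on first canceling the directed cycles in $D$ and then verifying, via the slackness inequalities, that the path decomposition simultaneously respects the $(s,v)$-capacities, respects the $(v,t)$-capacities, and still covers all sink edges. The remaining pieces (the $\delta^+$/slackness bookkeeping and the running-time accounting) are routine and parallel the analysis of Asahiro et al.\ \cite{DBLP:journals/ijfcs/AsahiroMOZ07}.
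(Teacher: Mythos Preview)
The paper does not give a standalone proof here: it sets up $\tilde G$, records $|\tilde E|\leq 3|E|$, cites the Even--Tarjan bound, and states that correctness ``can be proved analogously to \cite{DBLP:journals/ijfcs/AsahiroMOZ07}.'' Your proposal is precisely that analogous argument, so you are on the route the paper intends.

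The one point that needs tightening is the converse direction, exactly where you anticipate trouble. After cancelling cycles in the symmetric difference, the slackness inequalities do \emph{not} force the path decomposition to respect the $(v,t)$-capacities: a path may end at a node $v$ with $s(v)\geq 0$ (which has no edge to $t$ at all), and at a node with $s(v)<0$ the number of paths ending there equals $\delta^+_{\Lambda'}(v)-\delta^+_\Lambda(v)$, which your inequality only gives as $\geq |s(v)|$, not $=|s(v)|$. So ``verifying that the path decomposition respects the $(v,t)$-capacities'' will fail in general. The clean fix is not to verify compliance but to discard: take the decomposition so that every path is maximal (hence starts at a node with positive out--in imbalance in $D$ and ends at one with negative imbalance), then for each negative-slackness $v$ keep exactly $|s(v)|$ of the paths ending there and drop every other path, including all paths ending at nonnegative-slackness nodes. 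You have already shown there are enough paths to keep, and dropping paths only frees $(s,\cdot)$-capacity, so your start-capacity bound still applies; the surviving flow is feasible and saturates every edge into $t$. With this small amendment your sketch goes through, and the running-time accounting is exactly as in the paper.
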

\noindent\MinimumStarColoringFlow can also be used to compute an optimal solution of \textsc{MinMaxInd} in $\mathcal{O}(\log(\Delta) \cdot  |E| \cdot \min\{|V|^{\frac{2}{3}},|E|^{\frac{1}{2}}\})$ time by reducing it in linear time to \textsc{MinMaxStar}, as we show in Theorem \ref{thm:reduction_MMIPTOMMSP}. Therefore, this algorithm is a generalization of the algorithm which Asahiro et al. present in \cite{DBLP:journals/ijfcs/AsahiroMOZ07}.

\section{Comparing \textsc{MinMaxStar} and \textsc{MinMaxInd}}\label{sec:StarInd}
In this section, we show a linear reduction from \textsc{MinMaxInd} to \textsc{MinMaxStar}.
Afterward, we prove, that there always exists an orientation of $G$ that is optimal for \textsc{MinMaxStar} and \textsc{MinMaxInd} simultaneously when we consider the case without node capacities, i.e., $\kappa_v\geq \delta_v$ for all $v\in V$. Furthermore, we state explicitly how to compute such a solution.

\begin{restatable}{theorem}{reductionMMIPTOMMSP}
\label{thm:reduction_MMIPTOMMSP}
There exists a linear-time reduction from \textsc{MinMaxInd} to \textsc{MinMaxStar} for simple graphs.
\end{restatable}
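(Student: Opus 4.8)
The plan is to construct, from an instance of \textsc{MinMaxInd} on a simple graph $G=(V,E)$ with capacities $\kappa_v$, an instance of \textsc{MinMaxStar} on a modified graph $G'$ with modified capacities $\kappa'$, so that optimal solutions correspond. The key conceptual point is that in \textsc{MinMaxStar} a node $v$ pays for the star it is the internal node of, whereas in \textsc{MinMaxInd} it does not: $v$ pays only for incoming edges, i.e., for stars in which it is \emph{not} internal. So the difference between the two objectives at a node is exactly whether $v$ has at least one outgoing edge (equivalently, is the internal node of some star). The reduction must "hide" this $+1$ discrepancy.

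First I would attach to each original node $v$ a private pendant gadget: add a new leaf vertex $v'$ and the edge $\{v,v'\}$, and set $\kappa_{v'}=1$ in $G'$. The edge $\{v,v'\}$ can only be colored with $C(v)$ or $C(v')$; coloring it with $C(v')$ would force $v'$ to be the internal node, but this costs $v'$ nothing extra and costs $v$ one color. I would instead want it colored $C(v)$, so that in any reasonable solution $v$ is guaranteed to be the internal node of at least one star (the one containing $\{v,v'\}$), hence $v$'s \textsc{MinMaxStar} value equals its indegree-in-$G$ plus exactly one. Concretely, I would set $\kappa'_v=\kappa_v+1$ for $v\in V$ so the extra guaranteed star is absorbed, and argue that an orientation of $G$ with maximum indegree $k$ and $\delta^-(v)\le\kappa_v$ lifts to a valid star coloring of $G'$ with $x(G',C)=k+1$, by orienting all pendant edges toward their original endpoint (so $v$ is internal). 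Conversely, from an $x$-satisfying coloring of $G'$ I would read off an orientation of $G$ by keeping only the original edges; I must check the pendant leaves never become the bottleneck (they have value $1\le x^*$, using Lemma~\ref{lemma:special-graphs}(a) to handle the trivial edgeless / single-edge cases) and that every original node's induced indegree is at most its \textsc{MinMaxStar} value minus $1\le x-1$, which should be $\le\kappa_v$ by the choice $\kappa'_v=\kappa_v+1$. Thus $k^*(G)=x^*(G')-1$, and the coloring translates back to an optimal orientation.

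The main obstacle I anticipate is the corner case analysis around nodes of degree zero or one and around the "$+1$" bookkeeping when a node already has an outgoing edge in $G$ — I need the pendant trick to not \emph{increase} the optimum spuriously, i.e., to be sure that adding $\{v,v'\}$ and raising $\kappa_v$ by one does not create slack that lets some other node offload an edge and distort the correspondence. I would handle this by noting the pendant edge contributes at most one new color at $v$ and nothing elsewhere, and that the map orientation-of-$G$ $\leftrightarrow$ coloring-of-$G'$ is a bijection on the relevant feasible sets once we fix pendant edges to point inward; monotonicity of $x(G',C)$ under this fixing then gives both inequalities $k^*(G)\le x^*(G')-1$ and $k^*(G)\ge x^*(G')-1$. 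Finally I would observe the construction adds $|V|$ vertices and $|V|$ edges and recomputes capacities in $O(|V|)$ time, so the reduction is linear; combined with \MinimumStarColoringFlow this also yields the claimed running time for \textsc{MinMaxInd} noted after Theorem~\ref{thm:reduction_edge_orientation}.
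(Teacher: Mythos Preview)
Your proposal is correct and follows essentially the same route as the paper: the identical pendant-leaf construction with $\kappa'_v=\kappa_v+1$ and $\kappa'_{v'}=1$, the same forward lift of an orientation to a coloring by making $v$ internal for $\{v,v'\}$, and the same backward step of normalizing pendant edges to color $C(v)$ (what you call ``monotonicity under this fixing'' is exactly the paper's recoloring argument that moving $\{v,v'\}$ from $S_{v'}$ into $S_v$ never increases any node's star count). The only cosmetic difference is that the paper phrases the backward direction explicitly as a star-reassignment and then reads off the orientation, whereas you phrase it as restricting to $E$ after normalization; these are the same argument, and your linear-time accounting matches as well.
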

\noindent
\begin{proof}{ (Sketch)}
Given a graph $G=(V,E)$ with node capacities $\kappa$, for which we want to solve \textsc{MinMaxInd}, we construct a graph $G'$ with node capacities $\kappa'$, on which we solve \textsc{MinMaxStar} instead.
The graph $G'=(V \cup V', E \cup E')$ is constructed as follows: For each node $v$, we insert a copy $v'$ into $V'$ and insert the edge $\{v,v'\}$ into $E'$. The node $v$ gets the capacity $\kappa'_v := \kappa_v+1$ and $v'$ has the capacity $\kappa'_{v'}=1$. This construction is visualized in Figure \ref{fig:reduction_MMIP_to_MMSP}. The proof of $k^*(G)=x^*(G')-1$ is given in Appendix \ref{app:secReductions}.
\end{proof}

\begin{theorem}
    For any given simple connected graph $G$ without node capacity constraints, we can compute a solution that is optimal for \textsc{MinMaxStar} and \textsc{MinMaxInd} simultaneously in $\mathcal{O}(\log(\Delta) \cdot |E| \cdot \min\{|V|^{\frac{2}{3}},|E|^{\frac{1}{2}}\})$.
\end{theorem}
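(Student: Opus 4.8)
The plan is to leverage the structural relationship between the two problems without capacities. Recall that in the uncapacitated setting we have $k^*(G) \leq x^*(G) \leq k^*(G)+1$, since an orientation achieving maximum indegree $k^*(G)$ induces a star coloring in which a node $v$ is contained in at most $\delta^-(v)$ stars not having $v$ as internal node, plus possibly one star with $v$ as internal node. So the crux is: either $x^*(G) = k^*(G)$, in which case any orientation achieving the min-max indegree that also happens not to waste capacity on its own color gives an optimal \textsc{MinMaxStar} solution too; or $x^*(G) = k^*(G)+1$, in which case \emph{every} orientation achieving maximum indegree $k^*(G)$ is automatically optimal for \textsc{MinMaxStar} (its induced coloring has star partitioning number at most $k^*(G)+1 = x^*(G)$), and is optimal for \textsc{MinMaxInd} by construction. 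So the remaining work is to handle the first case constructively.

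First I would run \MinimumStarColoringFlow on $G$ with $\kappa_v = \delta(v)$ to obtain $x^*(G)$ together with an optimal valid star coloring $C$, hence an orientation $\Lambda(G,C)$; this costs $\mathcal{O}(\log(\Delta)\cdot|E|\cdot\min\{|V|^{2/3},|E|^{1/2}\})$. Since \MinimumStarColoringFlow also solves \textsc{MinMaxInd} via the reduction of Theorem \ref{thm:reduction_MMIPTOMMSP} in the same time bound, I would also compute $k^*(G)$ and an orientation $\Lambda_0$ with maximum indegree $k^*(G)$. If $x^*(G) = k^*(G)+1$, output $\Lambda_0$ (equivalently, the coloring induced by $\Lambda_0$): it is optimal for \textsc{MinMaxInd} trivially and optimal for \textsc{MinMaxStar} because its induced star partitioning number is at most $k^*(G)+1 = x^*(G)$. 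If instead $x^*(G) = k^*(G)$, I claim the coloring $C$ returned by \MinimumStarColoringFlow is already optimal for both: for every node $v$, the number of incoming edges in $\Lambda(G,C)$ is at most $x(G,C,v) \leq x^*(G) = k^*(G)$, so $\Lambda(G,C)$ has maximum indegree at most $k^*(G)$, hence is optimal for \textsc{MinMaxInd} as well.

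The one subtlety I expect to be the main obstacle is the bookkeeping in the case $x^*(G)=k^*(G)$: one must be careful that $x(G,C,v)$ counts the star with internal node $v$ when $v$ colors any incident edge in its own color, so $\delta^-(v)$ can in fact be strictly smaller than $x(G,C,v)$ — but that inequality goes the favorable direction ($\delta^-(v)\le x(G,C,v)$), so it still yields $\delta^-(v)\le k^*(G)$. Conversely I should double-check that $\Lambda(G,C)$ cannot have indegree exceeding $k^*(G)$; this follows because $\delta^-(v)$ counts exactly the stars containing $v$ with internal node $\neq v$, which is at most $x(G,C,v)\le x^*(G)=k^*(G)$. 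So no genuine conflict arises, and the correctness reduces to the case analysis above plus the already-established sandwich $k^*(G)\le x^*(G)\le k^*(G)+1$ and Theorems \ref{thm:reduction_edge_orientation} and \ref{thm:reduction_MMIPTOMMSP}. The runtime is dominated by the at most two invocations of \MinimumStarColoringFlow, giving the claimed $\mathcal{O}(\log(\Delta)\cdot|E|\cdot\min\{|V|^{2/3},|E|^{1/2}\})$ bound.
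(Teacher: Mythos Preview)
Your proposal is correct and follows essentially the same approach as the paper: compute an optimal \textsc{MinMaxStar} orientation and an optimal \textsc{MinMaxInd} orientation via \MinimumStarColoringFlow (the latter through the reduction of Theorem~\ref{thm:reduction_MMIPTOMMSP}), then select between them according to whether $x^*(G)=k^*(G)$ or $x^*(G)=k^*(G)+1$. Your added discussion of why $\delta^-(v)\le x(G,C,v)$ in the orientation $\Lambda(G,C)$ simply makes explicit the one-line justification the paper leaves implicit.
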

\noindent\begin{proof}
    Using the algorithm of Theorem \ref{thm:reduction_edge_orientation} we compute an optimal orientation $\Lambda_1$ for \textsc{MinMaxStar} with value $x^*(G)$. Combining Theorem \ref{thm:reduction_edge_orientation} and the linear reduction of Theorem \ref{thm:reduction_MMIPTOMMSP} we get an optimal orientation $\Lambda_2$ for \textsc{MinMaxInd} with value $k^*(G)$. On the one hand, if $x^*(G) = k^*(G) + 1$, then $\Lambda_2$ is also optimal for \textsc{MinMaxStar}. If on the other hand, $x^*(G) = k^*(G)$ then $\Lambda_1$ is also optimal for \textsc{MinMaxInd}, since in $\Lambda_1$ the indegree of any node is at most $x^*(G)$. 
\end{proof}

\;\newline
\noindent
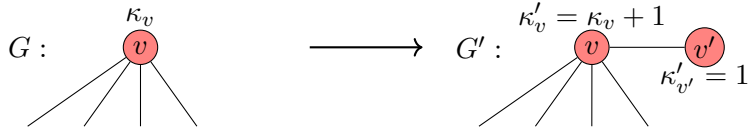
\begin{figure}[tb]
    \centering
\begin{tikzpicture}[scale=1.5]
\tikzset{%
    vertex/.style={%
        circle,inner sep=1,fill=gray!40, draw=black, minimum width=13pt
    }
}

\newcommand\xdiff{4};

\coordinate (c1) at (0,0);
\coordinate (c2) at (\xdiff,0);
\coordinate (c3) at (\xdiff+1,0);
\draw[->,thick] (1.5,0) to (\xdiff-1.5,0);
\node[] at (-1,0){$G:$};
\node[] at (-1+\xdiff,0){$G':$};

\node[vertex,fill=c1!50] (v) at (c1) {$v$};
\node[vertex,fill=c1!50] (w) at (c2) {$v$};
\node[vertex,fill=c1!50] (wt) at (c3) {$v'$};
\draw[-] (w) to (wt);

\foreach \i in {-1,-0.5,0,0.5}{
\draw[-] (v) to (\i,-0.7);
\draw[-] (w) to (\i+\xdiff,-0.7);
}

\node[]  [above of=v, node distance=4mm] {$\kappa_v$};
\node[]  [above of=w, node distance=4mm] {$\kappa'_v=\kappa_v+1$};
\node[]  [below of=wt, node distance=4mm] {$\kappa'_{v'}=1$};

\end{tikzpicture}
    \caption{A node $v$ in $G$ and its representation in $G'$}
    \label{fig:reduction_MMIP_to_MMSP}
\end{figure}
\vspace{-1cm}
\section{Hardness results}\label{ch:weightedMinMax}
In this section, we provide several reductions between the node-weighted decision versions of \textsc{MinMaxStar}, \textsc{MinMaxInd} and bin packing. For each node $v$, we introduce a positive node weight $w_v$. In contrast to the unweighted versions, we show that the weighted versions of both problems are \texttt{strongly NP-complete}. Note that this holds without any capacity constraints on the nodes.

For a valid star partition $C$, we redefine $x(G,C,v)$ for each node $v\in V$ as the sum of the weights of all stars $v$ is contained in. The weight of a star is defined as the weight of the node coloring all edges in the star. This node is defined as the internal node. For a given orientation $\Lambda$, we define $k(G,\Lambda,v)$ for each node $v\in V$ as the sum of weights of nodes having a directed edge toward $v$. If the graph and the orientation are clear from context, we simply write $x(v)$ or $k(v)$.
\newline

\begin{tabular}{l}
\hspace{-0.5cm}\textbf{(\textsc{W-MinMaxStar}):} node-weighted min-max star partitioning problem \\
\textbf{Input:} $k\in \R_{> 0}$, a simple graph $G=(V,E)$, weights $w_v \in \R_{>0}$  $\forall v\in V$. \\
\textbf{Question:}  Does there exist a star partitioning such that $\max\limits_{v\in V}\;x(v)\leq k$?
\end{tabular}

\begin{tabular}{l}
\hspace{-0.5cm}\textbf{(\textsc{W-MinMaxInd}):} node-weighted min-max indegree problem \\
\textbf{Input:} $k\in \R_{> 0}$, a simple graph $G=(V,E)$, weights $w_v \in \R_{>0}$  $\forall v\in V$. \\
\textbf{Question:} Does there exist an orientation such that $\max\limits_{v\in V}\;k(v)\leq k$?
\end{tabular}

\subsection{Reduction from \textsc{W-MinMaxInd} to \textsc{W-MinMaxStar}}\label{sec:redwIndwStar}
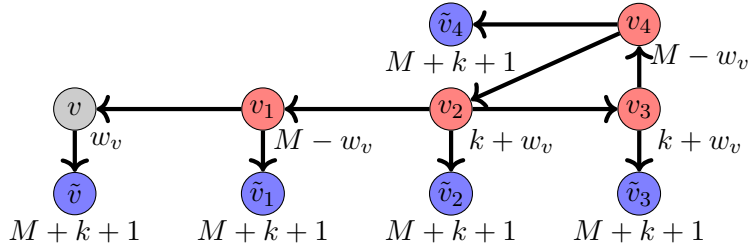
\begin{figure}[tb]
    \centering
\begin{tikzpicture}[scale=2.5]
\tikzset{%
    vertex/.style={%
        circle,inner sep=1,fill=gray!40, draw=black, minimum width=16pt
    }
}
\newcommand\size{0.45};
\coordinate (cv) at (0,0);
\coordinate (cb) at (0,-\size);
\coordinate (c1) at (1,0);
\coordinate (c2) at (2,0);
\coordinate (c3) at (3,0);
\coordinate (c4) at (3,\size);
\coordinate (cb1) at (1,-\size);
\coordinate (cb2) at (2,-\size);
\coordinate (cb3) at (3,-\size);
\coordinate (cb4) at (2,\size);

\node[vertex] (v) at (cv) {$v$};
\node[]  [below right of=v, node distance=6mm] {$w_v$};
\node[vertex,fill=c2!50] (vb) at (cb) {$\Tilde{v}$};
\node[]  [below of=vb, node distance=5mm] {$M+k+1$};

\foreach \i/\j in {1/{M-w_v},2/{k+w_v},3/{k+w_v},4/{M-w_v}} {
\node[vertex,fill=c1!50] (v\i) at (c\i) {$v_{\i}$};
\node[]  [below right of=v\i, node distance=6mm] {$\quad \quad \j$};
}

\foreach \i in {1,2,3,4} {
\node[vertex,fill=c2!50] (vb\i) at (cb\i) {$\Tilde{v}_{\i}$};
\node[]  [below of=vb\i, node distance=5mm] {$M+k+1$};
}
\draw [arrow2] (v1) to (v);
\draw [arrow2] (v) to (vb);
\draw [arrow2] (v1) to (vb1);
\draw [arrow2] (v2) to (v1);
\draw [arrow2] (v2) to (vb2);
\draw [arrow2] (v2) to (v3);
\draw [arrow2] (v4) to (v2);
\draw [arrow2] (v3) to (vb3);
\draw [arrow2] (v3) to (v4);
\draw [arrow2] (v4) to (vb4);

\end{tikzpicture}
\setlength{\belowcaptionskip}{-0.5cm}
\caption{Visualization of the gadget $G_{v}$ with a valid orientation with the value of at most $M+k$. The gadget forces $v$ to orientate $\{v,v_1\}$ toward $v$ and $\{v,\Tilde{v}\}$ toward $\Tilde{v}$ in each star partition with the value of at most $M+k$. The weights of the nodes are visualized next to each node.}
    \label{fig:reductionWeightedVariants}
\end{figure}

\noindent In this subsection, we will reduce \textsc{W-MinMaxInd} to \textsc{W-MinMaxStar}. We show that deciding \textsc{W-MinMaxInd} on $G$ is equivalent to deciding \textsc{W-MinMax\-Star} on a constructed graph $G'$ with value at most $M+k$ with $M:= k + 2 \max_{v \in V} w_v $.
The graph $G'$ is constructed by adding the gadget $G_v$ to every node $v\in V$ where $G_v=(V_v\cup \Tilde{V_{v}},E_v)$ with the node sets $V_v=\{v,v_1,v_2,v_3,v_4\}$ and $\Tilde{V_{v}}= \bigcup_{w \in V_v} \Tilde{w}$ and the edge set
$ E_v = \big\lbrace \{v,v_1\}, \{v_1,v_2\}, \{v_2,v_3\}, \{v_2,v_4\}, \{v_3,v_4\}\big\rbrace $ $\cup \bigcup_{w \in V_v} \big\lbrace \{w,\Tilde{w}\} \big\rbrace$
as visualized in Figure \ref{fig:reductionWeightedVariants}.
The weights of the nodes in $G_v$ are according to this figure.
\begin{restatable}{theorem}{reductionWeightedCases}
\label{thm:reductionWeightedCases}
There is a linear-time reduction from \textsc{W-MinMaxInd} to \textsc{W-MinMaxStar}.
\end{restatable}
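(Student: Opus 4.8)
The plan is to show that the instance $(G,w,k)$ of \textsc{W-MinMaxInd} is a yes-instance if and only if the constructed instance $(G',w',M+k)$ of \textsc{W-MinMaxStar} is a yes-instance, where $G'$ is obtained by attaching one copy of the gadget $G_v$ to every node $v\in V$ and $M:=k+2\max_{u\in V}w_u$. Since $G'$ has $\mathcal{O}(|V|)$ extra nodes and edges and all weights are computable in constant time each, the reduction is clearly linear. The heart of the argument is a structural lemma about the gadget: in every star partition of $G'$ with value at most $M+k$, the edge $\{v,v_1\}$ must be oriented toward $v$ (i.e.\ colored with $C(v_1)$) and the edge $\{v,\tilde v\}$ must be oriented toward $\tilde v$ (colored with $C(v)$), for every $v\in V$. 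Granting this, the correspondence between orientations of $G$ and orientations of the $G$-part of $G'$ becomes transparent.

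First I would prove the gadget lemma. The pendant node $\tilde w$ attached to each $w\in V_v$ has weight $M+k+1$, so if $\{w,\tilde w\}$ were oriented toward $w$, then $w$ alone would receive weight $\geq M+k+1 > M+k$; hence every pendant edge $\{w,\tilde w\}$ is oriented toward $\tilde w$, which in the star-coloring language means it is colored $C(w)$ — so $w$ is the internal node of a star of weight $w_w$, and this weight is counted toward $\tilde w$'s value but $\tilde w$'s value is then exactly $w_w\leq \max_u w_u < M+k$, fine. Now examine the cycle $v_1,v_2,v_3,v_4$ together with the chord $\{v_3,v_4\}$: this is not a tree, so in any star partition at least one of its edges is colored by the "wrong" endpoint relative to a would-be orientation, and a short case analysis on who is the internal node forces, using the weights $M-w_v$ on $v_1,v_4$ and $k+w_v$ on $v_2,v_3$, that each of $v_1,\dots,v_4$ already has incoming weight from within $\{v_1,v_2,v_3,v_4,v\}$ that uses up essentially all of its budget $M+k$ — specifically one checks that $v_1$ must point to $v$ (equivalently $v$ receives $v_1$'s weight $M-w_v$), because any other configuration overloads one of the four cycle nodes beyond $M+k$. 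The key numeric facts to verify here are $\text{(weight of }v_1)+w_v = M-w_v+w_v = M \le M+k$ while adding any further weight $\ge k+w_v$ would exceed $M+k$ iff $w_v>0$, and symmetric inequalities at $v_2,v_3,v_4$; I would tabulate the at most a handful of orientations of the five-edge subgraph $\{v,v_1\},\{v_1,v_2\},\{v_2,v_3\},\{v_2,v_4\},\{v_3,v_4\}$ and eliminate all but the one shown in Figure~\ref{fig:reductionWeightedVariants}.

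With the gadget lemma in hand, the reduction follows. Forward direction: given an orientation $\Lambda$ of $G$ with $k(G,\Lambda,v)\leq k$ for all $v$, orient $G'$ by copying $\Lambda$ on $E$ and using the forced gadget orientation of Figure~\ref{fig:reductionWeightedVariants} on each $E_v$. Then in $G'$ a node $v\in V$ receives weight exactly $k(G,\Lambda,v)$ from its $G$-neighbors plus $M-w_v$ from $v_1$ (and contributes nothing else to itself, since $\{v,\tilde v\}$ points away), giving value $\leq k + M - w_v < M+k$; every gadget node has value $\leq M+k$ by the computation above; and each $\tilde w$ has value $w_w\leq M+k$. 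Conversely, given a star partition $C$ of $G'$ of value $\leq M+k$, the gadget lemma says every $v\in V$ has the edge $\{v,v_1\}$ colored $C(v_1)$, contributing fixed weight $M-w_v$ to $v$, and the edge $\{v,\tilde v\}$ colored $C(v)$, contributing nothing to $v$; restricting $C$ to $E$ gives an orientation $\Lambda$ of $G$, and the value of $v$ in $G'$ is $k(G,\Lambda,v) + (M-w_v)$, so the bound $\leq M+k$ forces $k(G,\Lambda,v)\leq k$. Also note $C$ restricted to $E$ must actually be an orientation, i.e.\ each $e\in E$ is colored by one of its $V$-endpoints and not "left over," because every node of $G'$ already has its internal-star slot determined (the node $v$ is the internal node of the star through $\{v,\tilde v\}$, and each $v_i$ through $\{v_i,\tilde v_i\}$), so there is no room for an edge of $E$ to be colored by a gadget node — one should spell this out to be safe. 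The main obstacle is the gadget case analysis: making sure the chosen weights genuinely rule out every alternative orientation of the chorded 4-cycle and that the slack $k$ is neither so small nor so large as to admit a cheating configuration; the choice $M=k+2\max w_u$ is exactly what is needed to separate "carrying one heavy pendant plus one copy-edge" from "carrying anything more," and verifying these inequalities in each case is the part that requires care rather than ideas.
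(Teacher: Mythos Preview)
Your approach is the same as the paper's: the same gadget $G_v$, the same structural lemma forcing the orientation of $\{v,\tilde v\}$ and $\{v,v_1\}$, and the same two-direction argument. But your converse direction has a genuine error in the accounting of $x(G',C,v)$. You write that the edge $\{v,\tilde v\}$, being colored $C(v)$, ``contributes nothing to $v$'', and hence $x(v)=k(G,\Lambda,v)+(M-w_v)$. By the definition in Section~\ref{ch:weightedMinMax}, however, $x(G',C,v)$ is the sum of the weights of \emph{all} stars containing $v$, including the star whose internal node is $v$. Since $\{v,\tilde v\}$ is colored $C(v)$, node $v$ is the internal node of a star of weight $w_v$, and this $w_v$ must be counted. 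From your formula, $x(v)\le M+k$ would only yield $k(G,\Lambda,v)\le k+w_v$, not the required $k(G,\Lambda,v)\le k$; the conclusion you state does not follow from the equation you wrote. The correct computation (as in the paper) is
\[
x(G',C,v)=w_v+(M-w_v)+k(G,\Lambda,v)=M+k(G,\Lambda,v)\le M+k,
\]
which gives exactly $k(G,\Lambda,v)\le k$. The same omission of $w_v$ appears in your forward direction, where it happens not to break the inequality; but note that this is precisely why the pendant $\tilde v$ exists: it forces $v$ to be an internal node so that $w_v$ is charged and the gadget contributes exactly $M$ to $x(v)$.

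Two smaller points. First, the gadget is not a $4$-cycle on $v_1,v_2,v_3,v_4$ with a chord; there is no edge $\{v_1,v_4\}$. The non-pendant edges form the path $v\text{--}v_1\text{--}v_2$ together with the triangle on $v_2,v_3,v_4$. Your case analysis will still go through on the correct graph, and in fact (as in Lemma~\ref{lemma:orientation}) a short chain of implications suffices: assuming $\{v,v_1\}$ points to $v_1$ forces $\{v_1,v_2\}$ toward $v_2$, then both edges from $v_2$ toward $v_3$ and $v_4$, and then either orientation of $\{v_3,v_4\}$ overloads one of $v_3,v_4$. Second, your worry that an edge of $E$ might be ``left over'' is unfounded: in any star coloring every edge is colored with the color of one of its endpoints, and every edge of $E$ has both endpoints in $V$, so the restriction to $E$ automatically gives an orientation of $G$.
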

\noindent The proof is given in Appendix \ref{app:secReductions}. Note that because of the node weights, the reduction needs to be more evolved than Theorem \ref{thm:reduction_MMIPTOMMSP}.

\subsection{Reduction from bin packing to \textsc{W-MinMaxInd}}
In this subsection, we will reduce the \texttt{strongly NP-complete} bin packing problem to \textsc{W-MinMax\-Ind}. This shows together with Theorem \ref{thm:reductionWeightedCases} that \textsc{W-MinMax\-Ind} and \textsc{W-MinMaxStar} are \texttt{strongly NP-complete}.
\begin{theorem}
    \textsc{W-MinMaxInd} is \texttt{strongly NP-hard}.
\end{theorem}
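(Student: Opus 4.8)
The plan is to reduce from the classical \texttt{strongly NP-complete} bin packing problem: given items with positive integer sizes $a_1,\dots,a_n$, a bin capacity $B$, and a target number of bins $m$, decide whether the items can be packed into $m$ bins. I would build a star-shaped instance $G$ of \textsc{W-MinMaxInd} with one central ``bin-distribution'' vertex $r$, $m$ ``bin'' vertices $b_1,\dots,b_m$, and $n$ ``item'' vertices $u_1,\dots,u_n$, setting $w_{u_i} = a_i$ and the weights of $r$ and the $b_j$ to some large value $W$ that dwarfs $\sum_i a_i$. Connect each item vertex $u_i$ to every bin vertex $b_j$ (or, to keep the graph sparse and the reduction genuinely linear in the input size, connect each $u_i$ to a private padding structure and route the choice through a small selector gadget — but the complete bipartite wiring $\{u_i,b_j\}$ is conceptually cleanest for a first pass). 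The idea is that orienting edge $\{u_i,b_j\}$ toward $b_j$ encodes ``item $i$ is placed in bin $j$'': then $k(b_j)$ is exactly $\sum_{i : i\to j} a_i$ plus contributions from forced heavy edges, so requiring $k(b_j)\le B + (\text{offset})$ forces each bin's load to be at most $B$.

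The key steps, in order, would be: (1) fix the weight scaling so that any heavy–heavy edge ($w=W$ on both endpoints) is forced by the threshold $k$ to be oriented in a prescribed direction, exactly as the gadget in Figure~\ref{fig:reductionWeightedVariants} and Theorem~\ref{thm:reductionWeightedCases} force orientations — i.e. use auxiliary pendant/degree gadgets so that each bin vertex $b_j$ and the root $r$ are already ``saturated'' up to $k - B$ by unavoidable heavy inflow, leaving exactly slack $B$ for item edges; (2) argue the forward direction: a feasible packing into $m$ bins yields an orientation where every item edge points to its bin, every bin vertex has weighted indegree $\le k$, every item vertex has indegree $0$ from bins (so only its own light gadget edges, which are negligible), and the heavy gadget vertices meet the bound by construction; (3) argue the reverse direction: in any orientation with $\max_v k(v)\le k$, each edge $\{u_i,b_j\}$ that is not oriented toward $u_i$ must be oriented toward $b_j$, and the threshold on the $b_j$'s together with the forced heavy inflow leaves total residual capacity $mB \ge \sum_i a_i$ distributed as $\le B$ per bin, and one checks that every item edge is in fact forced toward some bin (because if too many item edges pointed at a single item vertex $u_i$, its own indegree would exceed $k$ once its weight is chosen appropriately) — hence the induced assignment is a valid packing; (4) verify that $k$, all weights, and the graph size are polynomial (indeed, with sizes written in unary as in the strongly-NP-complete formulation, all numbers are polynomially bounded), establishing \texttt{strong} NP-hardness.

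I would phrase the arithmetic bookkeeping through the slackness notion $s(G,\Lambda,v,k)$ already introduced, which makes the ``forced orientation of heavy edges'' argument clean: a vertex with huge weight and enough heavy neighbors has no slack to spare, so its heavy incident edges must all point away from it, pushing their weight onto the lighter side. Chaining these forced orientations along the gadget (just as in the proof sketch accompanying Figure~\ref{fig:reductionWeightedVariants}) pins down everything except the item-to-bin choices, which is precisely the combinatorial core of bin packing.

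The main obstacle I expect is making the reduction simultaneously \emph{linear-time} (or at least polynomial with polynomially bounded weights, which is all that is needed for strong NP-hardness) \emph{and} airtight about the forcing: one must ensure that in \emph{every} near-optimal orientation each item edge really is oriented toward a bin and not ``wasted'' by pointing into a padding gadget or toward the item vertex, since an item vertex of weight $a_i$ adjacent to many bins could in principle absorb several bin-to-item edges without exceeding $k$ if $k$ is large relative to $a_i$. The fix is to give each item vertex $u_i$ its own degree-limiting gadget (heavy pendants analogous to the $\tilde v_i$ vertices) so that $u_i$'s slack is exactly $0$ after accounting for forced heavy inflow, leaving it with no capacity to receive any bin edge; then every one of its incident bin edges is forced outward, i.e. toward a bin, recovering a total assignment. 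Getting these offsets to match exactly — so that ``orientation of value $\le k$'' is equivalent to ``packing into $m$ bins of capacity $B$'' with no off-by-one slack — is the delicate part, and is exactly the kind of constant-tuning already carried out for the $M=k+2\max_v w_v$ choice in Theorem~\ref{thm:reductionWeightedCases}.
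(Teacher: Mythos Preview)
You have the right source problem (bin packing) and the right bipartite skeleton (item vertices adjacent to all bin vertices, item weights equal to item sizes), but the mechanism you propose for the reverse direction does not work. You say the fix is to attach gadgets to each item vertex $u_i$ so that it has ``no capacity to receive any bin edge; then every one of its incident bin edges is forced outward, i.e.\ toward a bin.'' But if \emph{all} $m$ edges incident to $u_i$ are forced toward bins, then every bin vertex receives weight $a_i$ from item $i$, so $k(b_j)\ge\sum_i a_i$ for every $j$, independently of any packing. With your offsets this makes the orientation instance a YES-instance iff $\sum_i a_i\le B$, which is trivial and not equivalent to packing into $m$ bins of capacity $B$. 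What the reduction actually needs is that each item orients \emph{at least one} edge toward a bin while being allowed to absorb the remaining $m-1$; your heavy-bin-weight-plus-zero-slack scheme produces the opposite.

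The paper sidesteps all gadgetry with a single arithmetic choice: set every bin vertex weight to $c/(K-1)$ and take the threshold to be $c$. An item vertex that received all $K$ bin edges would have weighted indegree $K\cdot \tfrac{c}{K-1}>c$, so at least one edge must point toward a bin; receiving the remaining $K-1$ costs exactly $(K-1)\cdot\tfrac{c}{K-1}=c$, which is tight but feasible. Hence in any valid orientation each item selects at least one bin, and $k(b_j)$ is bounded by the total size of the items pointing to $b_j$. No central vertex $r$, no pendants, no offset matching --- the whole instance is just the complete bipartite graph $K_{n,K}$ with these weights. The ``delicate constant-tuning'' you anticipate is resolved by this one choice of bin weight, not by forcing gadgets.
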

\begin{proof}
    We construct a reduction from the \texttt{strongly NP-hard} bin packing problem \cite{10.5555/574848}. 
    Let a bin packing problem $\mathcal{B}$ be given by a finite set $I={i_1,\ldots, i_n}$ with size $s_i\in \mathbb{Z}^+$ for every element $i\in I$, which should be distributed into $K \geq 2$ bins with bin capacities $c$. 
    We construct a graph $G_{\mathcal{B}}=(V=V_1 \dot\cup V_2, E)$, where we introduce for every element of $i_j \in I$ a node $v_j\in V_1$ and for every bin $b_j$ a node $b_j \in V_2$. We construct an edge $\{v,b\}$ for every $v\in V_1, b\in V_2$. Additionally, we set the node weights $w_v=s_v$ for every $v\in V_1$ and $w_b= \frac{c}{K-1}$ for every $b\in V_2$. The graph $G_{\mathcal{B}}$ and its node weights have a size depending polynomial on the size of the bin packing problem $\mathcal{B}$.

    Given a partition of the elements in $I$ into $K$ bins, which obeys the capacity restriction $c$ of the bins, we construct an orientation of $G_{\mathcal{B}}$ with a maximum weighted indegree of at most $c$ the following way.
    If $i_j \in b$, we orient the edge $\{v_j,b\}$ toward $b$ and the edges $\{v_j,b'\}$ with $b' \in V_2 \setminus \{b\}$ toward $v_j$.
    Each node $v \in V_1$ has exactly $K-1$ incoming edges, which weights sum up to $c$.
    The weights of the elements in bin $b$ sum up to at most $c$, and the only edges directed toward the corresponding node $b \in V_2$ are edges from nodes which are constructed from these elements.

    Given an orientation $\Lambda$ of $G_{\mathcal{B}}$ with a maximum weighted indegree of at most $c$, we construct a partition of $I$ into $K$ bins with capacity $c$, as visualized in Figure \ref{fig:red_binPacking}.
    For every $v\in V_1$, there exists at least one edge that is directed away from $v$ since otherwise $k(G,\Lambda,v)=K\cdot \frac{c}{K-1} > c$. 
    For each node $v_j\in V_1$, let $b_l \in V_2$ be the node with the smallest index, where the edge $\{v_j,b_l\}$ is directed toward $b_l$. We put the element $i_j$ into the bin $b_j$. The weight of the elements in the bin $b$ is at most the sum of the weights of the incoming edges. This weight is at most $c$.
\end{proof}

\begin{figure}[tb]
    \centering
\begin{tikzpicture}[scale=2]
\tikzset{ 
    vertex/.style={
        circle,inner sep=1,fill=gray!40, draw=black, minimum width=13pt
    }
}

\coordinate (c1) at (1,0);
\coordinate (c2) at (2,0);
\coordinate (c3) at (3,0);
\coordinate (c4) at (4,0);
\coordinate (c5) at (5,0);
\coordinate (c6) at (6,0);

\coordinate (cb1) at (2,-1);
\coordinate (cb2) at (3.5,-1);
\coordinate (cb3) at (5,-1);

\node[] at (0.5,0) {$V_1:$};
\node[] at (0.5,-1) {$V_2:$};

\foreach \i/\g in {1/1,2/1,3/3,4/6,5/8,6/9}{
\node[vertex, fill=c\i!50] (v\i) at (c\i) {$v_{\i}$};
\node[]  [above of=v\i, node distance=5mm] {$\g$};
}

\foreach \i in {1,2,3}{
\node[vertex] (vb\i) at (cb\i) {$b_{\i}$};
\node[]  [below of=vb\i, node distance=5mm] {$5$};
}

\foreach \i/\j in {b1/2,b1/5,b1/6, b2/3,b2/4,b2/6, b3/2,b3/3,b3/4,b3/5}{
\draw[->] (v\i) to (v\j);
}

\foreach \i/\j in {1/b1,3/b1,4/b1, 1/b2,2/b2,5/b2, 1/b3,6/b3}{
\draw[edge,->,c\i] (v\i) to (v\j);
}

\end{tikzpicture}
    \caption{Visualization of an orientation with value at most $c$ of the constructed graph $G_{\mathcal{B}}$ for a given bin packing instance $\mathcal{B}$ with $K=3$ and $c=10$. Items are assigned to the leftmost bin in case the item has more than one outgoing edge.}
    \label{fig:red_binPacking}
\end{figure}
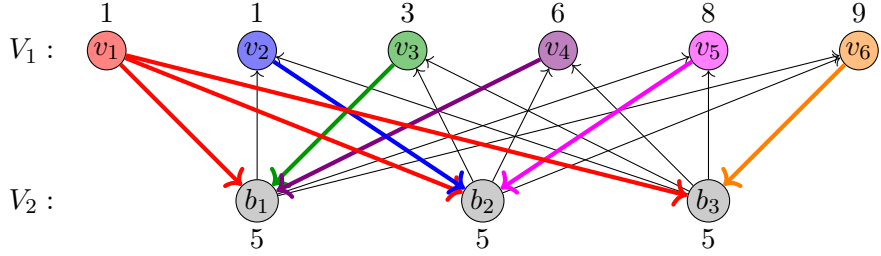

\begin{corollary}\label{thm:hardness}
Both \textsc{W-MinMaxInd} and \textsc{W-MinMaxStar} are \texttt{strongly} \texttt{NP\-complete}. 
\end{corollary}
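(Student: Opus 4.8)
The plan is to assemble the corollary from three ingredients already in place: membership in NP, the strong NP-hardness of \textsc{W-MinMaxInd}, and the reduction of Theorem~\ref{thm:reductionWeightedCases}.

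First I would check that both decision problems lie in NP. (As is standard, we regard the inputs $w_v$ and $k$ as rationals, which is the form produced by all reductions considered here.) For \textsc{W-MinMaxInd}, an orientation $\Lambda$ of $G$ is a certificate of size $\mathcal O(|E|)$; from it one computes $k(G,\Lambda,v)=\sum_{(u,v)\in\Lambda}w_u$ for every $v$ and verifies $\max_{v}k(v)\le k$ in polynomial time. For \textsc{W-MinMaxStar}, a valid star partition is described by an orientation together with, for each single-edge star, the choice of its internal node; under the bijection between orientations and valid colorings of Section~\ref{ch:model} this is again a certificate of size $\mathcal O(|E|)$, and $x(G,C,v)$ and the bound $\max_{v}x(v)\le k$ are checked in polynomial time. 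Hence both problems belong to NP.

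Second, the reduction from bin packing in the proof of the preceding theorem is in fact a \emph{pseudo-polynomial transformation} in the sense of Garey and Johnson: the graph $G_{\mathcal B}$ has $|I|+K$ nodes and $|I|\cdot K$ edges, and every number it outputs --- the weights $w_v=s_v$ and $w_b=\tfrac{c}{K-1}$, as well as the threshold $k=c$ --- is bounded by a polynomial in the largest number occurring in $\mathcal B$. Since bin packing is strongly NP-hard, \textsc{W-MinMaxInd} is therefore strongly NP-hard as well.

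Finally I would feed this into Theorem~\ref{thm:reductionWeightedCases}. That reduction attaches one constant-size gadget $G_v$ to each node, so $G'$ has only $\mathcal O(|V|)$ additional nodes and edges, and all of its weights together with the new threshold are of order $M+k+1=\mathcal O\big(k+\max_{v}w_v\big)$, hence polynomially bounded in the largest number of the \textsc{W-MinMaxInd} instance. Thus this reduction is again a pseudo-polynomial transformation, so strong NP-hardness is inherited by \textsc{W-MinMaxStar}; together with NP membership, both problems are strongly NP-complete. The only point that needs attention is exactly this bookkeeping: one must verify that composing the two reductions keeps every number polynomially bounded in the encoding length of the original bin-packing instance --- precisely the place where the word ``strong'' is used --- which is routine given the explicit bounds above.
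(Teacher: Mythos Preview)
Your proposal is correct and follows essentially the same approach as the paper: establish membership in \texttt{NP} via the obvious certificates (an orientation, respectively a valid coloring), and obtain \texttt{strong NP-hardness} by chaining the reduction from bin packing to \textsc{W-MinMaxInd} with the reduction of Theorem~\ref{thm:reductionWeightedCases}. Your write-up is in fact more careful than the paper's own one-paragraph argument, since you explicitly verify that both reductions are pseudo-polynomial transformations (all produced numbers are polynomially bounded in the largest input number), which is exactly what is needed for the word ``strong'' to carry through.
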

\noindent The \texttt{strong NP-hardness} of the problem is a consequence of the polynomial-time reductions from bin packing to \textsc{W-MinMaxInd} and from \textsc{W-MinMaxInd} to \textsc{W-MinMaxStar}.
\textsc{W-MinMaxInd} and \textsc{W-MinMaxStar} are in \texttt{NP}, because given an orientation $\Lambda$ or a valid coloring $C$, we can verify in polynomial time if $k(G,\Lambda,v) \leq x$ holds for every node $v$ or if $x(G,C,v) \leq x$ holds for every node $v$.

\vspace{-0.1cm}
\section{Approximation algorithms}\label{ch:approx}
\vspace{-0.1cm}
We have seen that \textsc{W-MinMaxInd} and \textsc{W-MinMaxStar} are \texttt{NP-complete}. We construct approximation algorithms for finding minimal values $k$ for which these problems are YES-instances for a given graph $G$.

\begin{theorem}\label{thm:approx}
    Each algorithm that is an $\alpha$-approximation for \textsc{W-MinMaxInd} is a $2\alpha$-approximation for \textsc{W-MinMaxStar}.
\end{theorem}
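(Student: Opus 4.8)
The plan is to relate the two objective values directly: I would show that for any graph $G$ with node weights, $x^*(G) \le 2\,k^*(G)$, where here $k^*(G)$ and $x^*(G)$ denote the optimal (minimum achievable maximum weighted) values for \textsc{W-MinMaxInd} and \textsc{W-MinMaxStar} respectively. Given such an inequality, an $\alpha$-approximation algorithm for \textsc{W-MinMaxInd} returns an orientation with value at most $\alpha\,k^*(G)$; reinterpreting this orientation as a star partition (using the natural orientation-to-coloring correspondence from Section \ref{ch:model}) I need to bound its star-partition value by $2\alpha\,k^*(G) \le 2\alpha\,x^*(G)$, which is the desired $2\alpha$-approximation guarantee.

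First I would fix an orientation $\Lambda$ and compare $k(G,\Lambda,v)$ with the weighted star value $x(G,C,v)$ of the induced coloring $C = C(\Lambda)$. For a node $v$, the weighted indegree $k(G,\Lambda,v)$ is the sum of weights $w_u$ over in-neighbors $u$ of $v$; each such in-edge $\{u,v\}$ gets color $C(u)$ in the induced coloring, so these contribute a total weight of at most $k(G,\Lambda,v)$ to the stars through $v$ that do not have $v$ as internal node. The only additional star through $v$ is the one with internal node $v$ itself (the edges directed out of $v$), contributing weight $w_v$. Hence $x(G,C,v) \le k(G,\Lambda,v) + w_v$. The remaining step is to bound $w_v$ by $k^*(G)$: since in any feasible orientation achieving value $k^*(G)$, as long as $v$ has positive degree, either some edge at $v$ points toward a neighbor $u$ (so $w_v$ shows up in $k(G,\cdot,u) \le k^*(G)$) or $v$ is a sink and $w_v$ itself is dominated by the incoming weights — more carefully, for any node $v$ with $\delta(v)\ge 1$ there is a neighbor whose weighted indegree in the optimal orientation is at least $w_v$, giving $w_v \le k^*(G)$; isolated nodes are irrelevant since $G$ has no such nodes of interest. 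Therefore $x(G,C,v) \le k^*(G) + k^*(G) = 2k^*(G)$ for every $v$, i.e. $x^*(G) \le 2k^*(G)$. Combining with $k^*(G) \le x^*(G)$ (which itself follows from the orientation/coloring correspondence, since a star coloring's induced orientation has weighted indegree at most its star value), this pins the two optima within a factor two.

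Putting it together: let $A$ be an $\alpha$-approximation for \textsc{W-MinMaxInd}; on input $G$ it outputs an orientation $\Lambda$ with $\max_v k(G,\Lambda,v) \le \alpha\, k^*(G)$. Output the induced coloring $C(\Lambda)$ as the answer for \textsc{W-MinMaxStar}. By the per-node bound above applied with this $\Lambda$, $\max_v x(G,C(\Lambda),v) \le \max_v k(G,\Lambda,v) + \max_v w_v \le \alpha\,k^*(G) + k^*(G)$. Since $\alpha \ge 1$ and $k^*(G) \le x^*(G)$, the right-hand side is at most $2\alpha\,k^*(G) \le 2\alpha\,x^*(G)$, establishing the $2\alpha$-approximation.

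The main obstacle I anticipate is the clean bound $w_v \le k^*(G)$: it requires arguing that in the optimal \textsc{W-MinMaxInd} orientation, the weight $w_v$ of \emph{every} node is witnessed as a summand in some node's weighted indegree. This is immediate when $v$ has an out-edge, but for a sink $v$ one must instead observe that $v$'s incoming weight is at least $w_{\min}$ and, more to the point, that one can reroute or directly reason that a node cannot have weight exceeding the optimal max indegree without forcing a violation somewhere — equivalently, that a single very heavy node would already make every incident orientation exceed $k^*(G)$ at one of its endpoints. I expect this to be a short but slightly delicate case analysis; everything else is the routine orientation-to-coloring bookkeeping.
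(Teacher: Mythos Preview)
Your overall strategy---take the orientation returned by the $\alpha$-approximation and reinterpret it as a star coloring---is the same as the paper's, and your per-node identity $x(G,C(\Lambda),v) \le k(G,\Lambda,v) + w_v$ (with the $w_v$ term present only when $v$ has an outgoing edge) is correct. The gap is exactly the obstacle you flagged: the claim $w_v \le k^*(G)$ for every non-isolated $v$ is simply false, and your proposed resolution for the sink case does not work. Take the two-node graph $u\text{--}v$ with $w_u=1$, $w_v=100$. Directing the edge $u\to v$ gives $k^*(G)=1$, yet $w_v=100$. Your intuition that ``a single very heavy node would already make every incident orientation exceed $k^*(G)$ at one of its endpoints'' breaks precisely because the optimal orientation can make such a node a sink, so that $w_v$ never appears as a summand in anyone's indegree. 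The same failure propagates to your final chain of inequalities, where you bound $\max_v w_v$ by $k^*(G)$.

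The paper avoids this entirely by bounding $w_v$ against the \emph{algorithm's} orientation rather than the optimal one: if $v$ has an outgoing edge in $\Lambda$, that edge points to some $w$, so $w_v$ is a summand of $k(G,\Lambda,w)\le k(G,\Lambda)$. Hence $x(G,C(\Lambda),v)\le k(G,\Lambda,v)+k(G,\Lambda)\le 2\,k(G,\Lambda)\le 2\alpha\,k^*\le 2\alpha\,x^*$; if $v$ has no outgoing edge in $\Lambda$ then $x(G,C(\Lambda),v)=k(G,\Lambda,v)$ and the bound is immediate. This one-line replacement fixes your argument and removes the need for any case analysis on the optimal orientation.
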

\noindent\begin{proof}
    Note that $k^*\leq x^*$ holds for optimal solutions $k^*$ of \textsc{W-MinMaxInd} and $x^*$ of \textsc{W-MinMaxStar}. 
    Let $\Lambda$ be an orientation with $k(G,\Lambda) \leq \alpha \cdot k^*$. For each node $v\in V$ we can distinguish two cases. 
    If $v$ has no incoming edges, then $x(G,\Lambda,v)=k(G,\Lambda,v)$. Else, we have $x(G,\Lambda,v)=k(G,\Lambda,v)+w_v$ and the value $w_v$ contributes to $k(G,\Lambda,w)$ for at least one $w\in V \setminus \{v\}$. Therefore, we can deduce that if $v$ has at least one outgoing edge then $w_v \leq k(G,\Lambda)$. Thus, it holds $ x(G,\Lambda) \leq 2 \cdot k(G,\Lambda) \leq 2 \alpha \cdot k^* \leq 2 \alpha \cdot x^*$.
\end{proof}

\;\\
\noindent
In scheduling unrelated parallel machines there are $m$ parallel machines and $n$ independent jobs. 
The objective is to assign each job to exactly one machine in a manner that minimizes the makespan. Assigning a job $i$ to a machine $j$ contributes to the processing time of machine $j$ with $p_{i,j}$.
A special instance occurs when taking an \textsc{W-MinMaxInd} instance and interpreting each node as a machine and each edge $e=\{v_i,v_j\}$ as a job with processing times $p_{e,v_i}=w_{v_j}$, $p_{e,v_j}=w_{v_i}$ and $p_{e,v}=\infty$ for $v \not \in e$.

Thus, \textsc{W-MinMaxInd} is a special case of scheduling unrelated parallel machines for which Lenstra et al. showed a 2-approximation in \cite{DBLP:journals/mp/LenstraST90} and their approximation algorithm yields a 4-approxi\-mation for \textsc{W-MinMaxStar}.
\vspace{-0.1cm}
\section{Conclusion and Outlook}\label{ch:furtherQuestions}
\vspace{-0.1cm}
We introduced \textsc{MinMaxStar} and created two algorithms finding an optimal solution in $\mathcal{O}(|E|^2)$ and $\mathcal{O}(\log(\Delta) \cdot |E| \cdot \min\{|V|^{\frac{2}{3}},|E|^{\frac{1}{2}}\})$ time. We then extended \textsc{MinMaxStar} to multigraphs and linear hypergraphs. 
We showed the connection between \textsc{MinMaxStar} and \textsc{MinMaxInd}.
Afterward, we introduced node-weighted variants \textsc{W-MinMaxStar} and \textsc{W-MinMaxInd}, which we proved to be \texttt{strongly NP-complete} in contrast to  \textsc{MinMaxStar} and \textsc{MinMaxInd}. Furthermore, we provided approximation algorithms for \textsc{W-MinMaxStar} and \textsc{W-MinMaxInd} with approximation factors of 4 and 2, respectively.

In the future, one could investigate the problem of maximizing the minimum number of incident edge colors of nodes, also known as the Santa Claus problem in scheduling \cite{10.1145/1132516.1132522}.
Further research can be done to find an algorithm that computes a star partition of hypergraphs, which are not linear. 
One may investigate if there exist even better approximation algorithms or approximation lower bounds for \textsc{W-MinMaxStar} and \textsc{W-MinMaxInd}. Furthermore, one can construct a star partitioning problem with weighted edges and different ways of incorporating the edge weights into the objective. 
This would be similar to graph balancing \cite{ebenlendr2014graph, DBLP:conf/icalp/JansenR19, DBLP:journals/scheduling/VerschaeW14}.

\;\newline
\noindent\textbf{Acknowledgements}\label{ch:Acknowledgements}
Constructive feedback by 
Tom Freudenberg, Volker Kaibel, 
Alexander Lindmayr, 
Elias Pitschmann,
Sebastian Sager,
Jens Schlöter, 
Daniel Schmand, 
Nicole Schröder
and our anonymous reviewers
is gratefully acknowledged.

\clearpage
\bibliographystyle{plainnat}
\bibliography{main.bib}

\newpage
\appendix
\section{Appendix}\label{app}
\subsection{Omitted proof of Section \ref{ch:model}}\label{app:characterisation}
\lemmaSpecialGraphs*
\noindent
\begin{proof}\renewcommand{\qed}{}
\begin{enumerate}
    \item[(a)] Is trivial.
    \item[(b)] Let $x^*(G)\leq 1$. Assume that there is a cycle of length $\ell>2$ in $G$ or that $G$ has a path of length at least three. Then there exist at least two neighboring nodes $v$ and $w$ with degree at least two. Thus, $v$ and $w$ color all their incident edges with their color and the edge $\{v,w\}$ is colored with two colors, which is not feasible.
    
    Let $G$ be a cycle free graph with a diameter at most two. Then each path in $G$ has length at most two.
    A valid coloring $C$ with $x(G,C)\leq 1$ can be constructed by first coloring all edges on paths of length two with the color of the middle node. Each remaining edge is colored with the color of one of its incident nodes.
    
    \item[(c)]  We prove the statement for each connected component $G_i$ of $G$.
    If $G_i$ contains no cycle, then it is a tree. We select a root node and recursively color all edges connecting the node $v$ with the children of $v$ with the color $C(v)$. This coloring is a valid star coloring, where every node is incident to at most two different edge colors. Consequently, it can be deduced that $x^*(G_i)\leq 2$.
    
    If $G_i$ contains one cycle, then by removing an edge from the cycle, this component becomes a tree. 
    We pick one node $r$ in the cycle to be the root and remove one incident edge $e=\{r,w\}$ of $r$, which is contained in the cycle, from the graph. Then we color the tree with root $r$ as described above. We additionally get $x(G_i,C,r)=1$. We reinsert the edge $e$ to the graph and color it such that $C(e)=C(w)$. This increases the number of incident colors of $r$ to two, and because $w$ only has one parent in the tree structure, we still have only two incident colors for $w$.
    \item[(d)] Suppose $x^*(G)\leq\left\lceil \frac{n}{2}\right\rceil.$
    Then it is possible to find a $x$-satisfying coloring for $x=\left\lceil \frac{n}{2}\right\rceil$. In such a coloring $n - \left\lceil \frac{n}{2}\right\rceil +1 = \left\lfloor \frac{n}{2}\right\rfloor + 1 $ incident edges have to be colored in $C(v)$ for each node $v$. Thus, the graph has at least $2n \left(\left\lfloor \frac{n}{2}\right\rfloor + 1 \right) > n^2$ edges, because $\left\lfloor \frac{n}{2}\right\rfloor + 1 > \frac{n}{2}$. This contradicts the fact, that $G$ has only $n^2$ edges.
Therefore, it can be concluded that $x^*(G)\geq\left\lceil \frac{n}{2}\right\rceil+1$. We define a valid star coloring with $x(G,C)=\left\lceil \frac{n}{2}\right\rceil+1$ using the following assignment. Let $G=(L \dot{\cup} R,E)$ be the bipartite graph with ascended numbered node sets $L=\{l_1,\hdots,l_n\}$ and $R=\{r_1,\hdots,r_n\}$. For each $l_i\in L$ with $i\leq \lfloor \frac{n}{2}\rfloor$ we direct the edge $\{l_i,r_j\}$ to $r_j$ if $j\leq \lfloor \frac{n}{2}\rfloor$ and to $l_i$ if $j > \lfloor \frac{n}{2}\rfloor$. 
For each $l_i\in L$ with $i > \lfloor \frac{n}{2}\rfloor$ we direct the edge $\{l_i,r_j\}$ to $l_i$ if $j\leq \lfloor \frac{n}{2}\rfloor$ and to $r_j$ if $j > \lfloor \frac{n}{2}\rfloor$.

$\hfill \square$
\end{enumerate}   
\end{proof}
\subsection{Pseudocode for the \texttt{MinimumStarColoring} algorithm}
Prior to presenting the pseudocode of the \texttt{MinimumStarColoring} algorithm, it is necessary to introduce additional notation.
For a partial coloring $C\in \mathcal{C}'(G)$, we define $N_{dif}^V(v)$ 
as the set of adjacent nodes $w$ to $v$ with $C(w)= C(e)$ for an edge $e$ containing both $v$ and $w$ and we define $N^E_{eq}(v)$ 
as the set of edges that are colored with the color $C(v)$. 
Additionally, $N^E_{none}(v)$ denotes all incident edges to $v$ which are uncolored.
\begin{align*}
    N^E_{eq}(v)\coloneqq & \{ {e \in  E'} \;|\; v \in e \text{ and } C(v)= C(e)\} \\
    N^E_{none}(v) \coloneq & \{ e \in E \; | \; v\in e, \ e \text{ is uncolored}\}\\
    N_{dif}^V(v)\coloneq & \{ w\in V \; | \; \exists \ {e\in E'}\text{ with }{v,w \in e} \text{ and }C(w)= C(e)\} 
\end{align*}
Note that $N_{dif}^V(v) \subseteq V$ and $N^E_{eq}(v),N^E_{none}(v) \subseteq E$ hold. 
\begin{algorithm}[h!]
    \small
    \DontPrintSemicolon

    \SetKwProg{Fn}{Function}{:}{}

\Fn{\ColorOneEdge{$v$}}{ 
    \KwData{Current node $v$}
    \KwResult{Returns TRUE if an additional edge incident to $v$ has been colored in place with $C(v)$ without changing $|N^E_{eq}(w)|$ for other nodes $w \neq v$, else return FALSE.}
    \If(\tcp*[f]{\scriptsize there is an uncolored edge incident to $v$}){$N^E_{none}(v)\neq \emptyset$}{ 
        choose $e \in N^E_{none}(v)$ \\
        $C(e) \leftarrow C(v)$\\
        \Return TRUE
    }
    \While(\tcp*[f]{\scriptsize every remaining neighbor that colors an incident edge}){$N_{dif}^V(v)\backslash K \neq \emptyset$}{
        choose $u\in N_{dif}^V(v)\backslash K$\\
         $K \leftarrow K \cup \{ u \}$ \tcp*[f]{\scriptsize store the asked nodes in $K$} \\
        \If{ \ColorOneEdge{$u$}}{
            let $e$ be the edge with $v,u \in e$\\
            $C(e) \leftarrow C(v)$\\
            \Return TRUE
        }
    }
    \Return FALSE 
}

\Fn{\EnsureFeasibility{}}{ 
\KwResult{Partial $\Delta(G)$-satisfying coloring $C$ of $G$}
\ForEach{$v\in V$ \textbf{with} $\kappa_v < \delta(v)$}{ 
\For{$i = 1 $ \textbf{to} $ \delta(v)- \kappa_v +1$}{
 $K \leftarrow \{ v \}$\\
 \If{$\neg$ \ColorOneEdge{$v$}}{
 \Return FALSE
 }
}
}
\Return TRUE
}

\Fn{\MinimumStarColoring{$G$}}{
\KwData{Nonempty, connected, simple graph $G$ with node capacities $\kappa_v$}
\KwResult{$x^*(G)$, partial $x^*(G)$-satisfying coloring $C$ of $G$. $x^*(G)= \infty$ if the instance is infeasible}
$C \leftarrow $ empty coloring,
$x \leftarrow \Delta(G)$, $K \leftarrow \emptyset$ \\
set $G$, $C$, $K$ and $x$ as global variables \\
\If{$\neg$ \EnsureFeasibility{}}{
\Return $\infty , C$ \tcp*[f]{\scriptsize infeasible}
}
$x \leftarrow \Delta(G)-1$ \\
\While{$x >0$}{
\ForEach{$v \in V$ \textbf{with} $|N^E_{eq}(v)|<l(v,x) \ \wedge  \ l(v,x)>1$}{
\While{$|N^E_{eq}(v)|<l(v,x)$}{
 $K \leftarrow \{ v \}$\\
 \If{ $\neg$ \ColorOneEdge{$v$} }{
 \Return $x+1, C$ \tcp*[f]{\scriptsize $C$ is $x+1$-satisfying and a coloring with $x$ is not possible }
 }
}
}
$x \leftarrow x - 1$
}
\Return $x+1 , C$

}
\caption{The \texttt{MinimumStarColoring} algorithm, which computes the minimum star coloring number $x^*(G)$ and a valid partial $x^*(G)$-satisfying coloring $C$ of $G$.}
\label{algorithm}
\end{algorithm}

\clearpage
\subsection{Additions to general hypergraphs of Section \ref{section:hypergraphs}}
For non-linear hypergraphs the coloring algorithm may not work. In Figure \ref{fig:hypergraph-counter-example} we visualized an example where our algorithm does not yield the correct solution. 
It is possible that two different edges incident to a node $v$ that may get the same color do not get the color $C(v)$. So, nodes can have incident edges with the same color. 
We always assumed that the only way to reduce the number of still possible incident colors for edges incident to a node $v$ is by letting $v$ color more incident edges in $C(v)$. This may be violated in this case. 
It is still an open problem whether finding a minimum star coloring of general hypergraphs is in \texttt{P}. We assume that the decision version of this problem is actually \texttt{NP-hard}.
\begin{figure}[htb]
    \centering
    \subfloat[]{
\begin{tikzpicture}[scale=1,
block/.style={
      rectangle,
      draw,
      thick,
      align=center,
      rounded corners
    }
]
    \foreach \i in {1,2,3,4,5}{
    \node[vertex,fill=c\i!50] (v\i) at (\i,0){$v_{\i}$};
    }
\node[block,minimum height=0.8cm,color=c3,fit=(v1) (v3)] {};
\node[block,minimum height=1cm,color=c3,fit=(v2) (v4)] {};
\node[block,minimum height=1.2cm,color=c3,fit=(v3) (v5)] {};
\end{tikzpicture}
}
\subfloat[]{
\hspace{1cm}
\begin{tikzpicture}[scale=1,
block/.style={
      rectangle,
      draw,
      thick,
      align=center,
      rounded corners
    }
]
    \foreach \i in {1,2,3,4,5}{
    \node[vertex,fill=c\i!50] (v\i) at (\i,0){$v_{\i}$};
    }
\node[block,minimum height=0.8cm,color=c3,fit=(v1) (v3)] {};
\node[block,minimum height=1cm,color=c2,fit=(v2) (v4)] {};
\node[block,minimum height=1.2cm,color=c3,fit=(v3) (v5)] {};
\end{tikzpicture}
}
    \caption{Example of a general hypergraph for which the \texttt{MinimumStarColoring} algorithm does not work. (a) A valid coloring with $x^*(G)=1$. 
    The algorithm starts with $x=2$. The only node $v$ with $l(v,x)>1$ is $v_3$ with $l(v_3,x)=2$.
    Thus, we have to color two different edges $v_3$ is contained in, e.g. $\{v_1,v_2,v_3\}$ and $\{v_3,v_4,v_5\}$. We reduce $x$ to $x=1$. Now the algorithm tells us to color two edges for the nodes $v_2$ and $v_4$, and one additional edge for $v_3$. If we start with $v_2$, the depth-first search is only able to color the edge $\{v_2,v_3,v_4\}$ with the color of $v_2$. Thus, the algorithm returns the $2$-satisfying coloring shown in (b), which is not optimal.} 
    \label{fig:hypergraph-counter-example}
\end{figure}
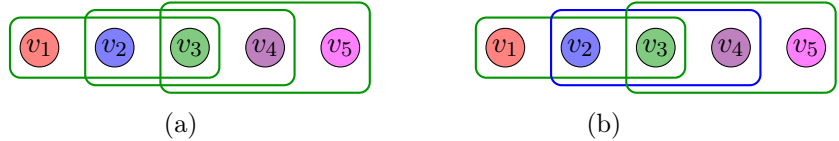

\subsection{Steps of the improved algorithm of Section \ref{ch:improvedAlgo}}\label{app:improvedAlgo}

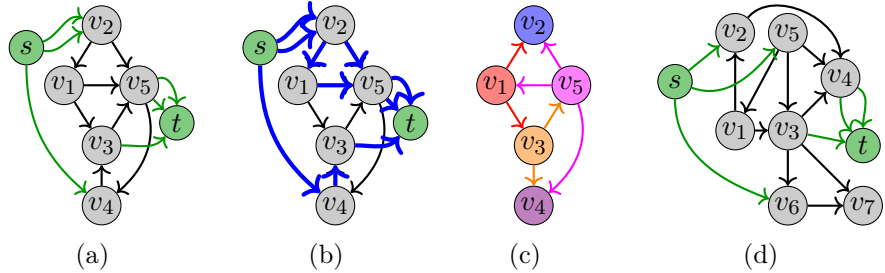
\begin{figure}[htb]
\centering
\subfloat[]{
\begin{tikzpicture}[scale=1]
\tikzset{%
    vertex/.style={%
        circle,inner sep=1,fill=gray!40, draw=black, minimum width=13pt
    }
}
\coordinate (c1) at (0.5,0);
\coordinate (c2) at (1,0.8);
\coordinate (c4) at (1,-1.6);
\coordinate (c3) at (1,-0.8);
\coordinate (c5) at (1.5,0);
\coordinate (cs) at (0,0.5);
\coordinate (ct) at (2,-0.5);
\foreach \i in {1,2,3,4,5} {
\node[vertex] (v\i) at (c\i) {$v_{\i}$};
}
\foreach \i in {s,t} {
\node[vertex,fill=c3!50] (v\i) at (c\i) {$\i$};
}

\foreach \i/\j in {2/1,1/3,1/5,4/3,2/5,3/5}{
\draw [->,thick] (v\i) to (v\j);
}

\draw [->,thick] (v5) to[out=-70,in=40]  (v4);

\draw [->,thick,c3] (vs) to[out=20,in=160] (v2);
\draw [->,thick,c3] (vs) to[out=-10,in=-160] (v2);
\draw [->,thick,c3] (vs) to[out=-90,in=140] (v4);

\draw [->,thick,c3] (v5) to[out=20,in=90] (vt);
\draw [->,thick,c3] (v5) to (vt);
\draw [->,thick,c3] (v3) to[out=0,in=-130] (vt);

\end{tikzpicture}
} \quad
\subfloat[]{
\begin{tikzpicture}[scale=1]
\tikzset{%
    vertex/.style={%
        circle,inner sep=1,fill=gray!40, draw=black, minimum width=13pt
    }
}
\coordinate (c1) at (0.5,0);
\coordinate (c2) at (1,0.8);
\coordinate (c4) at (1,-1.6);
\coordinate (c3) at (1,-0.8);
\coordinate (c5) at (1.5,0);
\coordinate (cs) at (0,0.5);
\coordinate (ct) at (2,-0.5);
\foreach \i in {1,2,3,4,5} {
\node[vertex] (v\i) at (c\i) {$v_{\i}$};
}
\foreach \i in {s,t} {
\node[vertex,fill=c3!50] (v\i) at (c\i) {$\i$};
}

\foreach \i/\j in {2/1,1/3,1/5,4/3,2/5,3/5}{
\draw [->,thick] (v\i) to (v\j);
}
\draw [->,thick] (v5) to[out=-70,in=40]  (v4);

\draw [->,ultra thick,c2] (vs) to[out=20,in=160] (v2);
\draw [->,ultra thick,c2] (vs) to[out=-10,in=-160] (v2);
\draw [->,ultra thick,c2] (vs) to[out=-90,in=140] (v4);

\draw [->,ultra thick,c2] (v5) to[out=20,in=90] (vt);
\draw [->,ultra thick,c2] (v5) to (vt);
\draw [->,ultra thick,c2] (v3) to[out=0,in=-130] (vt);

\foreach \i/\j in {2/1,2/5,1/5,4/3}{
\draw [->,ultra thick,c2] (v\i) to (v\j);
}
\end{tikzpicture}
}\quad
\subfloat[]{
\begin{tikzpicture}[scale=1]
\tikzset{%
    vertex/.style={%
        circle,inner sep=1,fill=gray!40, draw=black, minimum width=13pt
    }
}
\coordinate (c1) at (0.5,0);
\coordinate (c2) at (1,0.8);
\coordinate (c4) at (1,-1.6);
\coordinate (c3) at (1,-0.8);
\coordinate (c5) at (1.5,0);
\coordinate (cs) at (0,0.5);
\coordinate (ct) at (2,-0.5);

\foreach \i in {1,2,4,5} {
\node[vertex,fill=c\i!50] (v\i) at (c\i) {$v_{\i}$};
}
\node[vertex,fill=c6!50] (v3) at (c3) {$v_3$};

\foreach \i/\j in {1/2,1/3,5/1,5/2}{
\draw [->,thick,color=c\i] (v\i) to (v\j);
}
\draw [->,thick,color=c5] (v5) to[out=-70,in=40]  (v4);

\foreach \i/\j in {3/4,3/5}{
\draw [->,thick,color=c6] (v\i) to (v\j);
}

\foreach \i/\j in {}{
\draw [->,thick,c1] (v\i) to (v\j);
}
\end{tikzpicture}
}\quad \;
\subfloat[]{
\begin{tikzpicture}[scale=1]
\tikzset{%
    vertex/.style={%
        circle,inner sep=1,fill=gray!40, draw=black, minimum width=13pt
    }
}
\coordinate (c1) at (0.3,-1);
\coordinate (c2) at (0.3,0.3);
\coordinate (c3) at (1,-1);
\coordinate (c4) at (1.7,-0.3);
\coordinate (c5) at (1,0.3);
\coordinate (c6) at (1,-2);
\coordinate (c7) at (2,-2);
\coordinate (cs) at (-0.5,-0.35);
\coordinate (ct) at (2,-1.2);

\foreach \i in {1,2,3,4,5,6,7} {
\node[vertex] (v\i) at (c\i) {$v_{\i}$};
}
\foreach \i in {s,t} {
\node[vertex,fill=c3!50] (v\i) at (c\i) {$\i$};
}

\foreach \i/\j in {1/3,3/4,5/1,5/3,5/4,3/6,3/7,6/7}{
\draw [->,thick] (v\i) to (v\j);
}
\draw [->,thick] (v1) --(v2);
\draw [->,thick] (v2) to[out=50,in=90]  (v4);

\draw [->,thick,c3] (vs) to (v2);
\draw [->,thick,c3] (vs) to[out=-20,in=-130] (v5);
\draw [->,thick,c3] (vs) to[out=-90,in=160] (v6);

\draw [->,thick,c3] (v3) to (vt);
\draw [->,thick,c3] (v4) to[out=-30,in=90] (vt);
\draw [->,thick,c3] (v4) to[out=-90,in=130] (vt);

\end{tikzpicture}
}
\caption{In (a) the flow multigraph $\Tilde{G}$ is constructed for $x=2$ without node capacities. In (b) an integer maximum flow in $\Tilde{G}$ is computed. Reversing the edges along the flow and removing $s$ and $t$ results in an orientation, where the induced coloring $C$ fulfills $x(G, C)=2$, which is visualized in (c). In (d) we try to find an orientation in $G'$, such that the induced coloring $C$ fulfills $x(G', C)=2$. Each maximum $s-t$ flow in $\Tilde{G'}$ has a value, which is smaller than the indegree of $t$, thus there exists no valid star coloring of $G'$ with $x(G', C)=2$.}
\label{fig:example_improvedAlgo}
\end{figure} 
Applying the improved algorithm to an example graph where it is possible to improve the objective value and how it is done is visualized in Figure \ref{fig:example_improvedAlgo}(a-c). 
In Figure \ref{fig:example_improvedAlgo}(d) we show a multigraph $\Tilde{G}$ constructed with $x=2$ where an integer flow using every edge incident to $t$ does not exist.

\subsection{Omitted proof of Section \ref{sec:StarInd}}\label{app:secReductions}
\reductionMMIPTOMMSP*
\noindent
\begin{proof}\renewcommand{\qed}{}
Given a graph $G=(V,E)$ with node capacities $\kappa$, for which we want to solve \textsc{MinMaxInd}, we construct a graph $G'$ with node capacities $\kappa'$, on which we solve \textsc{MinMaxStar} instead.

The graph $G'=(V \cup V', E \cup E')$ is constructed as follows: For each node $v$, we insert a copy $v'$ into $V'$ and insert the edge $\{v,v'\}$ into $E'$. The node $v$ gets the capacity $\kappa'_v := \kappa_v+1$ and $v'$ has the capacity $\kappa'_{v'}=1$. This construction is visualized in Figure \ref{fig:reduction_MMIP_to_MMSP}. We prove that $k^*(G)=x^*(G')-1$:

Let $k \coloneqq k^*(G)<\infty$. There exists an edge orientation such that for each node $v \in V$ the maximum indegree is at most $k$. So, we construct the star $S_v$ and insert all outgoing edges from $v$ into the star. Additionally, we insert the edge $\{v,v'\}$ into $S_v$. So, $v$ is contained in the star $S_v$. Each incoming edge at $v$ belongs to a different star. Since the edge orientation is valid, there are at most $k$ incoming edges. Therefore, $v$ is contained in at most $k+1$ stars. Each node $v' \in V'$ is only contained in the star $S_{v'}$. Thus $x^*(G')-1 \leq k^*(G)$.

Now let $x\coloneqq x^*(G')<\infty$. We inspect the star partitioning, where each node is contained in at most $x$ stars. The edge $\{v,v'\}$ belongs to exactly one star. If $\{v,v'\}$ belongs to the star $S_{v'}$, we delete $S_{v'}$ and add $\{v,v'\}$ to the star $S_v$. The node $v'$ was contained in $S_{v'}$ and is now in $S_v$ instead. If the star $S_v$ already existed before, the number of stars $v$ is contained in decreases by one, because $v$ is no longer in $S_{v'}$. If the star $S_v$ did not exist before, $v$ is contained in the new star $S_v$ and deleted from the old star $S_{v'}$, so the number of stars $v$ is contained in does not change. Therefore, in this modification, each node is still contained in at most $x$ stars. Now each node $v\in V$ is contained in its own star $S_v$. Each edge $\{v,w\}$ is contained in exactly one star $S_v$, so we orientate it away from $v$.
For each $v \in V$, the number of stars other than $S_v$ in which the node $v$ is contained is exactly the indegree of $v$. Therefore $x^*(G')-1 \geq k^*(G)$. 

As illustrated above, the feasibility of both scenarios is equivalent, and thus $k^*(G)=\infty$ iff $x^*(G')=\infty$.$\hfill \square$
\end{proof}
\clearpage
\subsection{Omitted proofs of Section \ref{sec:redwIndwStar}}\label{app:secReductions2}
\noindent In order to prove Theorem \ref{thm:reductionWeightedCases} we first prove some useful properties of the gadget $G_v$ in the following lemma.
\begin{lemma}\label{lemma:orientation}
There is at least one valid star partition $C$ with \mbox{$x(G_v,C)\leq M+k$}.
In each valid star partition $C$ with $x(G_v,C)\leq M+k$, the following holds:
\begin{enumerate}
    \item[(a)] The edge $\{v,\Tilde{v}\}$ is directed to $\Tilde{v}$.
    \item[(b)] The edge $\{v,v_1\}$ is directed to $v$.
\end{enumerate}
\end{lemma}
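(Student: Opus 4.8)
\textbf{Proof proposal for Lemma \ref{lemma:orientation}.}

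The plan is to analyze the gadget $G_v$ locally, using the fact that every $\Tilde{w}$-node has the very large weight $M+k+1$, while $M = k + 2\max_{u\in V} w_u$ exceeds twice any original node weight. First I would establish the existence claim by exhibiting the orientation shown in Figure \ref{fig:reductionWeightedVariants}: orient each edge $\{w,\Tilde{w}\}$ toward $\Tilde{w}$, orient $\{v,v_1\}$ toward $v$, $\{v_1,v_2\}$ toward $v_1$, $\{v_4,v_2\}$ toward $v_2$, $\{v_2,v_3\}$ toward $v_3$, and $\{v_3,v_4\}$ toward $v_4$. Then I would simply compute the weighted indegree $x(\cdot)$ at each of the nine nodes $v, v_1,\dots,v_4,\Tilde{v},\dots,\Tilde{v}_4$ and check that each is at most $M+k$. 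The internal-node/star bookkeeping is routine: each $\Tilde w$ receives only the single edge $\{w,\Tilde w\}$ colored by $w$, contributing weight $w_w\le M$; and each $v_i$ receives a bounded combination of the weights $M-w_v$, $k+w_v$, $k$, etc., all of which were chosen precisely so the sum stays $\le M+k$. (This is exactly the ``valid orientation with value at most $M+k$'' promised in the figure caption, so I would keep the verification terse.)

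For the two structural claims, the key observation is: \emph{no node in $G_v$ can afford to have any $\Tilde w$ direct its edge back toward it.} Indeed, if $\{w,\Tilde w\}$ were directed toward $w$, then $w$ would receive weight $w(\Tilde w) = M+k+1 > M+k$ into its star count, already violating $x(G_v,C)\le M+k$ regardless of anything else incident to $w$. Hence in every valid partition with value $\le M+k$, every edge $\{w,\Tilde w\}$ is directed toward $\Tilde w$ — this immediately gives part (a) for $w=v$. It also means that, for the purpose of bounding $x(\cdot,\cdot,w)$ at the original nodes $w\in\{v,v_1,v_2,v_3,v_4\}$, the pendant edge $\{w,\Tilde w\}$ contributes $0$ to $w$'s count (it is colored by $w$, so it lies in $w$'s own star), and we only have to account for the five ``path/cycle'' edges among $v,v_1,v_2,v_3,v_4$.

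For part (b), I would argue by contradiction: suppose $\{v,v_1\}$ is directed toward $v_1$ (i.e.\ colored $C(v)$, so $v_1$'s star from $v$ carries weight $w_v$). Now consider the budget at $v_1$, $v_2$, $v_3$, $v_4$. Each of these has its pendant edge already accounted for (weight $0$ toward itself). Counting the triangle $v_2v_3v_4$ together with the path edges $\{v_1,v_2\}$ and $\{v,v_1\}$, a short case analysis over the orientations of these five edges shows that the weighted indegrees cannot all be kept $\le M+k$ once $w_v$ is forced onto $v_1$: the weights $M-w_v$ at $v_1$ and $v_4$, combined with $k+w_v$ at $v_2$ and $v_3$, are tuned so that the total ``demand'' $\sum l(\cdot)$ over $\{v_1,v_2,v_3,v_4\}$ exceeds what the available edges can supply, forcing some node to exceed $M+k$ — in particular the extra weight $w_v$ on $\{v,v_1\}$ tips $v_1$ over whenever $v_1$ also has to absorb $\{v_1,v_2\}$, while redirecting $\{v_1,v_2\}$ away from $v_1$ overloads $v_2$, and so on around the triangle. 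Since $M+k-(M-w_v)=k+w_v$ and $M+k-(k+w_v)=M-w_v$, the arithmetic closes exactly. I expect this finite case check on the five free edges to be the main obstacle: it is not deep, but it requires being careful that every one of the handful of orientation patterns of the subgraph on $\{v_1,v_2,v_3,v_4\}$ is ruled out, using both the large weights $M-w_v$ and the moderate weights $k+w_v$. Once (b) holds, one re-derives existence-consistency with (a) and the lemma follows.
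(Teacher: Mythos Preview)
Your overall strategy---exhibit the orientation of Figure~\ref{fig:reductionWeightedVariants} for existence, force all pendant edges toward the $\Tilde w$-nodes for (a), and argue (b) by contradiction---is exactly the paper's approach. However, there is a genuine error in your bookkeeping that, if followed literally, would make the (b) argument fail.

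You write that once $\{w,\Tilde w\}$ is directed toward $\Tilde w$, ``the pendant edge $\{w,\Tilde w\}$ contributes $0$ to $w$'s count (it is colored by $w$, so it lies in $w$'s own star).'' This is false in the \textsc{W-MinMaxStar} model: $x(G_v,C,w)$ is the sum of the weights of \emph{all} stars containing $w$, including $S_w$ itself. If the pendant is $w$'s only outgoing edge, it is the sole reason $w\in S_w$, and hence contributes $w_w$ to $x(w)$. Concretely, in the scenario where $\{v,v_1\}$ and $\{v_1,v_2\}$ are both directed toward $v_1$, your accounting would give $x(v_1)=w_v+(k+w_v)=k+2w_v\le M+k$, and no contradiction arises. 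The paper's computation is $x(v_1)=w_v+(M-w_v)+(k+w_v)=M+k+w_v>M+k$, where the middle term $M-w_v=w_{v_1}$ is precisely the own-star contribution you dropped. Without it the forcing chain never starts. (Your later remark that ``$M+k-(M-w_v)=k+w_v$'' suggests you may actually intend to subtract the own weight from the budget, but this is inconsistent with the earlier ``contributes $0$'' claim; you need to commit to the correct convention.)

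Separately, your (b) argument as written is a promissory note (``a short case analysis'', ``demand exceeds supply'') rather than a proof. The paper does not enumerate all $2^4$ orientations of the remaining edges; it runs a clean forcing chain: assuming $\{v,v_1\}\to v_1$, the budget at $v_1$ forces $\{v_1,v_2\}\to v_2$; then the budget at $v_2$ forces both $\{v_2,v_3\}\to v_3$ and $\{v_2,v_4\}\to v_4$; finally either orientation of $\{v_3,v_4\}$ overloads its head with $(k+w_v)+(k+w_v)+(M-w_v)>M+k$. Replacing your vague counting argument with this three-step cascade---and restoring the own-star weight at each step---would complete the proof.
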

\noindent\begin{proof}
Figure \ref{fig:reductionWeightedVariants} yields the existence, since $x(v_1)=x(v_2)=x(v_4)= k + M$ and $x(v_3)= 2 k + 2 w_v \leq k + M$. 
Every edge connected to a node $\Tilde{v}'\in \Tilde{V_{v}}$ is directed toward this node since $w_{\Tilde{v}'}> M+k$. This proves (a). 
Assume $\{v, v_1\}$ is directed toward $v_1$. Then $\{v_1, v_2\}$ is directed toward $v_2$, since otherwise $x(v_1)= w_v + (M - w_v ) + k+ w_v> M + k$. 
Similarly, $v_2$ can not have another incoming edge, since otherwise also $x(v_2)>M + k$. Thus, the edges $\{v_2, v_3\}$ and $\{v_2, v_4\}$ are orientated toward $v_3$ and $v_4$ respectively. 
Now we have two remaining cases:
If $\{v_3, v_4\}$ is directed toward $v_{3}$ we have $x(v_3)= (k+w_v) + (k+ w_v) + (M-w_v)>M + k$ and if it is directed toward $v_4$ we have $x(v_4)= (k+w_v) + (k+ w_v) + (M-w_v)>M + k$. This is a contradiction; thus, the edge $\{v,v_1\}$ is directed to $v$.
\end{proof}

\reductionWeightedCases*
\noindent
\begin{proof}\renewcommand{\qed}{}
For each graph $G=(V,E)$ for \textsc{W-MinMaxInd} we construct the graph $G'=(V',E')$.
If there exists an orientation $\Lambda$ of $G$ with a maximum weighted indegree lesser than or equal to $k$, then we can construct an orientation $\Lambda'$ of $G'$ corresponding to a valid star coloring $C$ with $x(G',C)\leq M+k$ with $M= k + 2 \max_{v \in V} w_v$ the following way:
For each edge $e \in E$ we set $\Lambda(e)=\Lambda'(e)$. And each edge $e \in E' \setminus E$ is in a gadget $G_v$. Thus, we orient each gadget $G_v$ as visualized in Figure \ref{fig:reductionWeightedVariants}. For each node in $v'\in V'\setminus V$ we proved in Lemma \ref{lemma:orientation}, that it fulfills the condition $x(G',C,v') \leq M+k$. Every node $v \in V$ is connected to edges in $E$ and edges in $E' \setminus E$. Additionally, $v$ has at least one outgoing edge $\{v,\Tilde{v}\}$. 
$$x(G',C,v)=  w_{v} + w_{v_1} +\sum_{\substack{u \in V: \\ \Lambda(\{u,v\})=(u,v)}} w_u  \leq w_v + (M- w_v) + k = M +k$$
Let $C$ be the coloring of a valid star partition of $G'$ with $x(G',C) \leq M+k$. We define $\Lambda(e)=\Lambda'(e)$ for each $e \in E$. With Lemma \ref{lemma:orientation} we know for every $v\in V$ the orientation of the edges $\{v, \Tilde{v}\}$ and $\{v,v_1\}$, which results in a value of $x(G_v,C,v)=M$. Thus, the sum of the incoming edges from $E$ is at most $k$. 

The construction of $G'$ can be done in linear time since we deterministically add only $9 |V|$ nodes and $10 |V|$ edges with weights, which linearly depend on $k$ and the weights of $G$.
$\hfill \square$
\end{proof}

\end{document}